\newcommand{\ord}{\mathrm{ord}\,}
\newcommand{\cont}{\mathrm{cont}}
\newcommand{\Zer}{\mathrm{Zer}}
\newcommand{\bC}{\mathbf{C}}
\newcommand{\bN}{\mathbf{N}}
\newcommand{\bR}{\mathbf{R}}
\newcommand{\bQ}{\mathbf{Q}}
\newcommand{\supp}{\mathrm{supp}\,}
\newcommand{\jac}{\mathrm{jac}\,}
\newcommand{\Nj}{{\cal N}_J}
\newcommand{\Teis}[2]{
   \setlength{\unitlength}{1ex}
   \begin{picture}(2,0)(0,0.4)
      \put(0,1.1){\line(1,0){2}}
      \put(0,0.9){\line(1,0){2}}
      \put(1,1.2){\makebox(0,0)[b]{$\scriptstyle #1$}}
      \put(1,0.8){\makebox(0,0)[t]{$\scriptstyle #2$}}
   \end{picture}}
\newcommand{\Teisssr}[4]{
   \setlength{\unitlength}{1ex}
   \begin{picture}(#3,3)(0,0.4)
      \put(0,1.15){\line(1,0){#3}}
      \put(0,0.85){\line(1,0){#3}}
      \put(#4,1.3){\makebox(0,0)[b]{$#1$}}
      \put(#4,0.7){\makebox(0,0)[t]{$#2$}}
   \end{picture}}
\newtheorem{Theorem}{Theorem}
\newtheorem{Lemma}{Lemma}
\newtheorem{Corollary}{Corollary}
\newtheorem{Remark}{Remark}
\newtheorem{Property}{Property}
\newtheorem{Proposition}{Proposition}
\newtheorem{Definition}{Definition}
\newtheorem{Example}{Example}
\newenvironment{proof}[1][Proof]{\textbf{#1.} }{\
\rule{0.5em}{0.5em}}
\title{On the approximate jacobian Newton diagrams of an irreducible
plane curve  \footnotetext{
     \noindent   \begin{minipage}[t]{4.5in}
       {\small
       2000 {\it Mathematics Subject Classification:\/} Primary 32S55;
       Secondary 14H20.\\
       Key words and phrases: irreducible plane curve, approximate root,
       jacobian Newton diagram, polar quotient.\\
       The first-named author was partially supported by the Spanish Project
       PNMTM 2007-64007.}
       \end{minipage}}}
\author{Evelia R.\ Garc\'{\i}a Barroso and Janusz Gwo\'zdziewicz}
\begin{document}
\maketitle

\begin{abstract}
\noindent We introduce the notion of an approximate jacobian Newton
diagram which is the jacobian Newton diagram of the morphism
$(f^{(k)},f)$, where $f$ is a branch and $f^{(k)}$ is a
characteristic approximate root of $f$. We prove that the set of all
approximate jacobian Newton diagrams is a complete topological
invariant. This generalizes  theorems of Merle and Ephraim about the
decomposition of the polar curve of a branch.
\end{abstract}

\section{Introduction}

\noindent Every two complex series $f,g\in\bC\{x,y\}$ such that
$f(0,0)=g(0,0)=0$ define 
a germ of a holomorphic mapping
$(g,f):(\bC^2,0)\longrightarrow (\bC^2,0)$.
Assume that the curves $f=0$ and $g=0$ share no common component. 
Then the critical locus of this mapping is a  germ of an analytic curve
and its  direct image by $(g,f)$ is also an analytic curve called the 
\textit{discriminant curve}. Let $D(u,v)=0$ be an equation of 
the discriminant curve in the coordinates $(u,v)=(g(x,y),f(x,y))$. 
We call the Newton diagram of $D(u,v)$ the \textit{jacobian Newton
diagram} of the morphism $(g,f)$ and denote it~$\Nj(g,f)$.  

\noindent Note that if $g=0$ is a smooth curve transverse to $f=0$ 
then $\Nj(g,f)$ is  the jacobian Newton diagram 
of the curve $f=0$ introduced in \cite{Teissier3}. 
With these assumptions Teissier proves in~\cite{Teissier1} 
that $\Nj(g,f)$ depends only on the topological type of the curve $f=0$. 

\noindent 
Merle in \cite{Merle} studies the case of a smooth curve $g=0$ 
transverse to an irreducible singular curve $f=0$.
He gives a description of the jacobian Newton diagram in 
terms of other invariants of singularity of a curve $f=0$. 
He also shows that the datum of the jacobian Newton diagram determines 
the equisingularity class of the curve (or equivalently its embedded topological type).  Ephraim in \cite{Ephraim} extends Merle's result to any smooth curve $g=0$. 

\medskip
\noindent 
Let $f$ be an irreducible Weierstrass polynomial. 
In this paper we generalize the results of Merle to
the family $\{{\cal N}_J(f^{(k)},f)\}_k$, where $f^{(k)}$ is the
$k$-th characteristic approximate root of $f$ introduced in
\cite{Abhyankar-Moh}. We prove, in two different ways, that this family is a complete topological invariant
of the branch $f=0$. Our computations are based on the decomposition of the critical locus of 
the mapping $(f^{(k)},f)$, which is analogous to the decomposition of the polar curve obtained by Merle in
\cite{Merle}.  

\section{Plane branches, semigroup and approximate roots}
\label{plane-branches}

\medskip
\noindent
We mean by the fractional power series the elements of the ring $\bC\{x\}^*=\bigcup_{n\in \bN}\bC\{x^{1/n}\}$. 
For every two fractional power series  $\delta$ and~$\delta'$ 
we call the number ${\cal O}(\delta,\delta')=\ord_x(\delta(x)-\delta'(x))$ 
the {\em contact order} between $\delta$ and $\delta'$. 

\medskip
\noindent
Every convergent power series $g(x,y)\in\bC\{x,y\}$, $g(0,0)=0$ has a Newton-Puiseux factorization
$$ g(x,y)=u(x,y)x^N\prod_{i=1}^d (y-\gamma_i(x)), 
$$
where $u(x,y)\in\bC\{x,y\}$, $u(0,0)\neq0$,  $N$ is a nonnegative integer
 and $\gamma_i(x)$ are fractional power series  of positive order. 
We will call $\gamma_i$ the Newton-Puiseux roots of~$g$ and denote the set 
$\{\gamma_1,\dots,\gamma_d\}$  by $\Zer g$.

\medskip
\noindent 
Let $f(x,y)$ be an irreducible power series such that $\ord_y(f(0,y))=n\geq 1$.
Then $f$ has a Newton-Puiseux root of the form 
$\gamma_1(x)=\sum_{i=1}^{\infty} a_ix^{i/n}$. 
The other Newton-Puiseux roots are 
$\gamma_j(x)=\sum_{i=1}^{\infty} a_i\omega^{(j-1)i}x^{i/n}$ for
$1\leq j\leq n$, where $\omega\in\bC$ is an $n$-th primitive root of unity.
 The contact orders between the elements of $\Zer f$ form a set 
$\{ {b_1}/{n},\dots, {b_g}/{n}\}$, where $b_1<b_2<\dots<b_g$ and 
$\gcd(n,b_1,\dots,b_g)=1$. 
We put $b_0=n$ and call the sequence $(b_0,b_1,\dots,b_g)$
the {\em Puiseux characteristic} of $f$. By convention $b_{g+1}=+\infty$.

\medskip

\noindent Let  $A$ and $B$ be finite sets of fractional power series.
The {\em contact} $\cont(A,B)$ is by definition
$\max\{{\cal O}(\alpha,\beta):\; \alpha\in A, \, \beta \in B\}$.
If $\alpha(x)$ is a fractional power series and $f(x,y)$, $g(x,y)$ are irreducible 
power series co-prime to $x$ then by abuse of notation we will write 
$\cont(\alpha,f):=\cont(\{\alpha\},\Zer f)$  and  
$\cont(f,g):=\cont(\Zer f ,\Zer g)$.

\medskip
\noindent It is well-known (see for example Lemma 4.3 of \cite{Casas})
that for every Newton-Puiseux root $\alpha$ of $f$  we have
$\cont(\alpha,g)=\cont(f,g)$. The contact between irreducible 
power series has a strong triangle inequality property:
if $h_i\in\bC\{x,y\}$  for $ i=1,2,3$ are irreducible power series co-prime to $x$ then 
$\cont(h_1,h_2)\geq \min(\cont(h_1,h_3),$ $\cont(h_2,h_3))$.

\medskip
\noindent In \cite{Abhyankar-Moh} the authors introduce the concept
of {\em approximate root} as a consequence of the following proposition:

\begin{Proposition}
Let $\mathbf A$ be an integral domain. If $f(y)\in \mathbf A[y]$ is
monic of degree $d$ and $p$ is invertible in $\mathbf A$ and divides
$d$, then there exists a unique monic polynomial $g(y) \in \mathbf A[y]$
such that the degree of $f-g^p$ is less than $d-\frac{d}{p}$.
\end{Proposition}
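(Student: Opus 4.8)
The plan is to construct $g$ by successive approximation on the "top" coefficients of $f$, fixing one coefficient at a time, and then to prove uniqueness by a degree estimate. Write $f(y) = y^d + c_1 y^{d-1} + \dots + c_d$ with $c_i \in \mathbf A$, and look for $g(y) = y^{d/p} + a_1 y^{d/p - 1} + \dots + a_{d/p}$. Expanding $g^p$ by the multinomial theorem, the coefficient of $y^{d-j}$ in $g^p$ is $p\, a_j$ plus a polynomial in $a_1, \dots, a_{j-1}$ (with coefficients in $\mathbf Z$, hence in $\mathbf A$), because the only way to reach degree $d-j$ using a factor $a_j y^{d/p - j}$ is to take it once together with $p-1$ copies of the leading term $y^{d/p}$. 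Since $p$ is invertible in $\mathbf A$, one can solve recursively: having chosen $a_1, \dots, a_{j-1}$ so that $f - g^p$ has no terms in degrees $d-1, \dots, d-j+1$, the coefficient of $y^{d-j}$ in $f - g^p$ is $c_j$ minus (that earlier polynomial) minus $p a_j$, and we set $a_j$ to make it vanish.

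First I would carry out this recursion for $j = 1, 2, \dots, \lceil d - d/p \rceil - 1$, i.e.\ only until the degrees we must kill drop below $d - d/p$. This produces a monic $g$ of degree $d/p$ such that every term of $f - g^p$ of degree $> d - d/p$ vanishes; equivalently $\deg(f - g^p) < d - d/p$, which is existence. The remaining coefficients $a_{\lceil d-d/p\rceil}, \dots, a_{d/p}$ of $g$ are still free at this stage, so for uniqueness I must pin them down as well: I claim that in fact $g$ is forced to be the unique monic polynomial with $\deg(f - g^p) < d - d/p$, which already determines all of $a_1, \dots, a_{d/p}$ through the same recursion continued to $j = d/p$. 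So the cleaner organization is: run the recursion all the way to $j = d/p$, obtaining a canonical candidate $g$, note it satisfies the degree bound, and then prove no other monic degree-$d/p$ polynomial can.

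For uniqueness, suppose $g_1, g_2$ are monic of degree $d/p$ with $\deg(f - g_i^p) < d - d/p$. Then $\deg(g_1^p - g_2^p) < d - d/p$. Factor $g_1^p - g_2^p = (g_1 - g_2)(g_1^{p-1} + g_1^{p-2} g_2 + \dots + g_2^{p-1})$. If $g_1 \neq g_2$, the first factor has some degree $e$ with $0 \le e \le d/p - 1$, and — here is the one point needing care — the second factor is monic of degree exactly $(p-1)(d/p) = d - d/p$, since it is a sum of $p$ monic polynomials each of degree $d - d/p$ and the leading terms add up to $p\, y^{d-d/p}$, which is nonzero because $p$ is invertible (hence nonzero) in $\mathbf A$. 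Therefore $\deg(g_1^p - g_2^p) = e + (d - d/p) \ge d - d/p$, contradicting the bound; so $g_1 = g_2$.

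The main obstacle is the bookkeeping in the multinomial expansion: one must check precisely that the coefficient of $y^{d-j}$ in $g^p$ has the shape $p\,a_j + P_j(a_1,\dots,a_{j-1})$ with $P_j$ having coefficients in the prime ring (so that solvability reduces exactly to invertibility of $p$), and separately that the cofactor $\sum_{i=0}^{p-1} g_1^i g_2^{p-1-i}$ really is monic of the stated degree — this is where invertibility of $p$ is used a second, essential time. Everything else is a routine induction on $j$.
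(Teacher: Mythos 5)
The paper does not actually prove this proposition: it is quoted from Abhyankar--Moh \cite{Abhyankar-Moh} (see also \cite{Popescu}), so there is no in-paper argument to compare yours against. Your argument is the standard one and is correct in the form you finally settle on. The key combinatorial point is right: in the multinomial expansion of $g^p$ a product of terms dropping the total degree by exactly $j$ and involving some $a_i$ with $i\ge j$ must consist of that single factor $a_iy^{d/p-i}$ together with $p-1$ copies of the leading term, forcing $i=j$; hence the coefficient of $y^{d-j}$ in $g^p$ is $p\,a_j+P_j(a_1,\dots,a_{j-1})$ with $P_j\in\mathbf Z[a_1,\dots,a_{j-1}]$, and invertibility of $p$ lets you solve for $a_j$ recursively. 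Your uniqueness argument via $g_1^p-g_2^p=(g_1-g_2)\sum_{i=0}^{p-1}g_1^{i}g_2^{p-1-i}$ is also sound: the cofactor is a sum of $p$ monic polynomials of degree $d-d/p$, its leading coefficient $p$ is nonzero because $p$ is a unit, and degrees add because $\mathbf A$ is a domain.

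The one thing to clean up is the indexing in your first existence paragraph. The coefficients to be killed are those of $y^{d-j}$ with $d-j\ge d-d/p$, i.e.\ $j=1,\dots,d/p$: exactly as many equations as unknowns $a_1,\dots,a_{d/p}$. Your range $j=1,\dots,\lceil d-d/p\rceil-1$ confuses the degree threshold $d-d/p$ with the index threshold $d/p$ (these coincide only for $p=2$), and in general it points at coefficients $a_j$ with $j>d/p$ that do not exist. Moreover, killing only the terms of degree strictly greater than $d-d/p$ gives $\deg(f-g^p)\le d-d/p$, not the required strict inequality, so the step $j=d/p$ is not optional and there are no ``remaining free coefficients'' of $g$. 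Your ``cleaner organization'' --- run the recursion for all $j=1,\dots,d/p$ --- is exactly right, and since each step forces $a_j$ uniquely it already yields uniqueness as well, making the factorization argument a pleasant double-check rather than a necessity.
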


\medskip

\noindent This allows us to define:

\begin{Definition}
The unique monic polynomial of the preceding proposition is called
the $p$-th approximate root of $f$.
\end{Definition}

\noindent Let $f\in\bC\{x\}[y]$ be an irreducible Weierstrass polynomial 
with Puiseux characteristic $(b_0,\ldots,b_g)$. 
Put $l_k:=\gcd(b_0,\ldots,b_k)$. In particular $l_k$ divides 
$\deg f=b_0$ for all $k\in \{0,\ldots,g\}$.
In the sequel for $k\in \{0,\ldots,g-1\}$ we denote $f^{(k)}$ the $l_k$-th approximate root of $f$ 
and we call these polynomials the {\em characteristic approximate} roots of $f$. 
By convention we put $f^{(-1)}=x$.

\medskip

\noindent The following proposition is the main one in
\cite{Abhyankar-Moh} (see also \cite{GP2} and \cite{Popescu}):

\begin{Proposition}
\label{ch-prop} 
Let $f\in\bC\{x\}[y]$ be an irreducible Weierstrass polynomial 
with Puiseux characteristic $(b_0,\ldots,b_g)$. 
Then the characteristic approximate roots~$f^{(k)}$  for $k\in \{0,\ldots,g-1\}$,
have the following properties:

\begin{enumerate}

\item The polynomial $f^{(k)}$ is irreducible
with Puiseux characteristic 
$(b_0/l_k,\ldots,b_k/l_k)$.

\item The $y$-degree of $f^{(k)}$ is equal to $b_0/l_k$ and
$\cont(f,f^{(k)})=b_{k+1}/b_0$.

\end{enumerate}
\end{Proposition}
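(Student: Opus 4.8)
The assertion on $y$-degrees is immediate: $f^{(k)}$ is by definition the $l_k$-th approximate root of the monic polynomial $f$ of $y$-degree $b_0$, hence is monic of $y$-degree $b_0/l_k$. Everything else must be squeezed out of the single defining relation
\[
\deg_y\big(f-(f^{(k)})^{l_k}\big)<b_0-b_0/l_k ,
\]
so the plan is: (i) deduce from it that $\cont(f,f^{(k)})=b_{k+1}/b_0$; (ii) deduce that $f^{(k)}$ is irreducible; (iii) read off the Puiseux characteristic from the $y$-degree and the contact via the standard dictionary between the characteristic exponents of a branch and its contacts with other branches. Throughout I would work with Newton--Puiseux roots: fix $\gamma\in\Zer f$, so $n=b_0$, use that $\cont(h,f)={\cal O}(\gamma,\delta)$ for any $\delta\in\Zer h$ (Lemma 4.3 of \cite{Casas}), and use the strong triangle inequality for contact recalled in Section~\ref{plane-branches}.

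For step (i), write $g=f^{(k)}$, $p=l_k$ and $Q=f-g^{p}=\sum_i q_i(x)y^i$ with $\deg_y Q=e<b_0-b_0/l_k$. Evaluating at a root $\gamma$ of $f$ gives $g(x,\gamma(x))^{p}=-Q(x,\gamma(x))$, hence $p\cdot\ord_x g(x,\gamma(x))=\ord_x Q(x,\gamma(x))$, and, summing over $\Zer f$, $p\cdot\mathrm{int}(f,g)=\ord_x\mathrm{Res}_y(f,Q)$. The plan is to bound $\ord_x\mathrm{Res}_y(f,Q)$ from above in terms of $e$ and the Newton--Puiseux data of $f$ — a polynomial of $y$-degree $e$ cannot meet the branch $f$ with more than a certain intersection number, increasing in $e$ — and to combine this with $e<b_0-b_0/l_k$ and with elementary lower estimates for $\ord_x Q(x,\gamma(x))$ coming from its monomials. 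This squeezes $\mathrm{int}(f,g)$ to one value determined by the Puiseux characteristic of $f$, which is exactly equivalent to $\cont(f,g)=b_{k+1}/b_0$. The estimate — that a small $y$-degree of $f-(f^{(k)})^{l_k}$ forces $(f^{(k)})^{l_k}$ to osculate $f$ to precisely the right order — is the genuine content of \cite{Abhyankar-Moh}; I would follow the argument there, or the expositions via the Tschirnhausen operator in \cite{GP2} and \cite{Popescu}, and this is the step I expect to be the main obstacle.

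For step (ii), irreducibility of $f^{(k)}$ follows by a standard argument: any nontrivial factorization $f^{(k)}=\prod_r h_r$ would, by additivity $\mathrm{int}(f,f^{(k)})=\sum_r\mathrm{int}(f,h_r)$ together with the strong triangle inequality applied to $\gamma$ and the roots of the $h_r$, be incompatible with the exact value of $\cont(f,f^{(k)})$ obtained in (i) and the $y$-degree $b_0/l_k$. Finally, for step (iii), knowing that $f^{(k)}$ is an irreducible Weierstrass polynomial of $y$-degree $b_0/l_k$ with $\cont(f,f^{(k)})=b_{k+1}/b_0$, one compares the contacts among the roots of $f^{(k)}$ with those among the roots of $f$: by the strong triangle inequality the former coincide with exactly those contacts $b_i/b_0$ of $f$ that are $<b_{k+1}/b_0$, i.e.\ $i=1,\dots,k$; rescaling by the Puiseux parameter of $f^{(k)}$, whose denominator has dropped by the factor $l_k$, turns $b_i/b_0$ into $b_i/l_k$ over the leading term $b_0/l_k$, yielding the Puiseux characteristic $(b_0/l_k,b_1/l_k,\dots,b_k/l_k)$ and completing the proof.
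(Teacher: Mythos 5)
You should first be aware that the paper does not prove Proposition~\ref{ch-prop} at all: it is quoted as the main theorem of \cite{Abhyankar-Moh}, with \cite{GP2} and \cite{Popescu} cited as expositions, and is used as a black box. So there is no argument in the text to compare yours with; what can be assessed is whether your outline would stand on its own, and as written it would not.

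The gap is in your step (i), which is where all the content lives. The identity $p\cdot(f,g)_0=(f,Q)_0$ is a tautology: since $Q\equiv -g^{p}\pmod{f}$, it says no more than $(f,g^{p})_0=(f,g^{p})_0$, so at that point no information from the approximate-root condition has actually been extracted. Worse, the estimate you then plan to use --- that $(f,Q)_0$ admits an upper bound in terms of $e=\deg_y Q$ and the Newton--Puiseux data of $f$ --- does not exist: $Q=x^N$ has $y$-degree $0$ and $(f,Q)_0=Nb_0$ arbitrarily large, and the same failure occurs whenever $Q$ has an irreducible factor of very high contact with $f$. The genuine content, which you correctly identify as the main obstacle but then outsource to \cite{Abhyankar-Moh}, \cite{GP2}, \cite{Popescu}, is the inductive argument via the Tschirnhausen operator (equivalently, the $g$-adic expansion of $f$) proving $(f,f^{(k)})_0=\overline{b_{k+1}}$; without that, nothing is proved. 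A secondary wrinkle: in step (ii) you argue from ``the exact value of $\cont(f,f^{(k)})$,'' but $\cont$ is a maximum over pairs of roots and can be realized by a single irreducible factor of a reducible $g$, so the contact alone does not preclude a factorization. The argument that does work starts from the intersection number $(f,g)_0=\overline{b_{k+1}}$: no irreducible factor $h_j$ of $g$ can have contact $>b_{k+1}/b_0$ with $f$ (its $y$-degree would then be a multiple of $b_0/l_{k+1}=n_{k+1}b_0/l_k>\deg_y g$); by the monotonicity of Property~\ref{Noether} each quotient $(f,h_j)_0/(x,h_j)_0$ is therefore at most the value attained at contact $b_{k+1}/b_0$; summing and comparing with $\overline{b_{k+1}}$ forces equality, hence every factor has contact exactly $b_{k+1}/b_0$, hence $y$-degree a multiple of $b_0/l_k=\deg_y g$, hence there is only one factor. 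With that in place your step (iii) is essentially correct.
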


\begin{Example}\label{example1} 
Take the irreducible Weierstrass polynomial
$f=(y^3-6x^3y-x^4)^2-9x^9$ of Puiseux characteristic $(6,8,11)$.
The characteristic approximate roots of~$f$ are 
$f^{(0)}=y$ and $f^{(1)}=y^3-6x^3y-x^4$. 
The Newton-Puiseux roots of $f$ are of the form 
$y=\omega^8x^{4/3}+2\omega^{10}x^{5/3}+\omega^{11}x^{11/6}+\cdots$, 
where $\omega^6=1$ while 
the Newton-Puiseux roots of $f^{(1)}$ are 
$y=\epsilon^4x^{4/3}+2\epsilon^5x^{5/3}-\frac{8}{3}x^2+\cdots$, where $\epsilon^3=1$.  
One can check directly 
that $\cont(f,f^{(0)})=8/6$ and $\cont(f,f^{(1)})=11/6$.
\end{Example}

\section{Jacobian Newton diagrams}
\label{jacobian-newton-diagram}
\medskip

\noindent In this section we recall the notion of the jacobian
Newton diagrams and we establish some preliminary results which are
necessary for the next.

\medskip

\noindent Write $\bR_{+}=\{\,x\in\bR: x\geq0\,\}$. Let
$f\in\bC\{x,y\}$, $f(x,y)=\sum a_{i,j}x^iy^j$ be a non-zero
convergent power series. Put $\supp f:=\{\,(i,j): a_{i,j} \neq0\,\}$
the {\em support} of $f$. By definition the {\em Newton diagram} of
$f$ in the coordinates $(x,y)$  is
$$
\Delta_f:= \mbox{Convex Hull}\;(\supp f+\bR_{+}^2).
$$

\noindent An important property of Newton diagrams is that the
Newton diagram of a product is the Minkowski sum of Newton diagrams.
One has $\Delta_{fg}=\Delta_f+\Delta_g$, where
$\Delta_f+\Delta_g=\{\,a+b:a\in\Delta_f, b\in\Delta_g\,\}$. In
particular if $f$ and $g$ differ by an invertible factor $u\in
\bC\{x,y\}$, $u(0,0)\neq0$ then $\Delta_f=\Delta_g$. Thus the
Newton diagram of a plane analytic curve is well defined because an
equation of an analytic curve is given up to invertible factor,  where an analytic plane curve is a principal ideal  of the ring of convergent power series 
$\mathbf C\{x,y\}$, which we will denote  by $f(x,y)=0$.
We will write $\Delta_{f=0}$ for the Newton diagram of the curve~$f=0$.

\medskip

\noindent Following Teissier \cite{Teissier2} we introduce {\em
elementary Newton diagrams}. For $m,n>0$  we put
$\{\Teis{n}{m}\}=\Delta_{x^n+y^m}$. We put also
$\{\Teis{n}{\infty}\}=\Delta_{x^n}$ and
$\{\Teis{\infty}{m}\}=\Delta_{y^m}$.

\medskip

\noindent Every Newton diagram $\Delta \subsetneq \bR_{+}^2$ has a
unique representation $\Delta=\sum_{i=1}^r\left\{
\Teis{L_i}{M_i}\right\}$, where {\em inclinations} of successive
elementary diagrams form an increasing sequence (by definition the
inclination of  $\left\{\Teis{L}{M}\right\}$ is $L/M$ with the
conventions that $L/\infty=0$ and $\infty/M=+\infty$). We shall call
this representation the {\em canonical decomposition} of $\Delta$. 

\medskip

\noindent Let $\sigma=(g,f):(\bC^2,0)\to(\bC^2,0)$ be an analytic
mapping given by $\sigma(x,y)=(g(x,y),f(x,y)):=(u,v)$ and such that
$\sigma^{-1}(0,0)=\{(0,0)\}$.
Then every local analytic curve $h(x,y)=0$ has a well-defined 
{\em direct image} $\sigma^{*}(h=0)$
which is an analytic curve in the target space (see \cite{Casas-Asian}).
The Newton diagram of the direct image is characterized by two properties: 
\begin{enumerate}

\item  If $h$ is an irreducible power series then 
         $\Delta_{\sigma^{*}(h=0)}=\left\{\Teisssr{(f,h)_0}{(g,h)_0}{7}{3.5}\right\}$,
        where~$(r,s)_0$ denotes the intersection multiplicity of the curves $r=0$ and $s=0$ at the origin.

\item If $h=h_1h_2$ then 
         $\Delta_{\sigma^{*}(h=0)}=\Delta_{\sigma^{*}(h_1=0)}+\Delta_{\sigma^{*}(h_2=0)}$.

\end{enumerate}

\noindent
Let $\jac(g,f)=\frac{\partial
g}{\partial x}\frac{\partial f}{\partial y} - \frac{\partial
g}{\partial y}\frac{\partial f}{\partial x}$ be the jacobian 
determinant of the mapping $\sigma$. The direct image
(see Preliminaries in \cite{Casas-Asian}) of 
$\jac(g,f)=0$ by $\sigma$ is called the {\em discriminant curve}. 
We will write $\Nj(g,f)$ for the  Newton diagram of the discriminant curve  
and following Teissier (see \cite{Teissier3}) call it  the {\em jacobian Newton diagram} 
of the morphism $\sigma=(g,f)$.

\section{Approximate jacobian Newton diagrams of a
branch} \label{computation}

\medskip

\noindent In this section we introduce the notion of the approximate
jacobian Newton diagrams of an irreducible plane curve and we
compute them. In what follows a branch $f(x,y)=0$ will be given by an
irreducible Weierstrass polynomial.

\medskip
\noindent Let $f$ be an irreducible Weierstrass polynomial and let
$f^{(k)}$, for $0\leq k \leq g-1$, be the characteristic
approximate roots of $f$. The jacobian Newton diagram  ${\cal
N}_J(f^{(k)},f)$ is called the $k$-th {\em approximate jacobian Newton
diagram of the branch} $f(x,y)=0$.

\medskip
\noindent The following result about the factorization of the jacobian
$\jac(f^{(k)},f)$ is the main result of this note:

\begin{Theorem}\label{decomposition} 
Let $f\in\bC\{x\}[y]$ be an irreducible Weierstrass polynomial 
with Puiseux characteristic $(b_0,\ldots,b_g)$.
Let $f^{(k)}$, $0\leq k\leq g-1$, be the $k$-th characteristic approximate root of $f$. 
Then  the jacobian $\jac(f^{(k)},f)$ admits a factorization
$$\jac(f^{(k)},f)=\Gamma^{(k+1)}\cdots \Gamma^{(g)},
$$
where the factors $\Gamma^{(i)}$ are not necessary irreducible, 
$x$ is co-prime to the product $\Gamma^{(k+2)}\cdots \Gamma^{(g)}$  and such that
\begin{enumerate}
\item If $\alpha$ is a Newton-Puiseux root of $\Gamma^{(k+1)}$  then  $\cont(\alpha,f)<b_{k+1}/b_0$.
\item If $\alpha$ is a Newton-Puiseux root of $\Gamma^{(i)}$, $k+2 \leq i\leq g$ then $\cont(\alpha,f)=b_i/b_0$.
\item The intersection multiplicity 
$(\Gamma^{(i)},x)_0=n_1\cdots n_{i-1}(n_i-1)$ for $k+2 \leq i\leq g$.
\end{enumerate}
\end{Theorem}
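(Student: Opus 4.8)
The plan is to analyze $\jac(f^{(k)},f)$ through its Newton-Puiseux roots, grouping them by contact with $f$.

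\medskip
\noindent First I would parametrize. Let $n = b_0 = \deg_y f$ and $m = b_0/l_k = \deg_y f^{(k)}$, so $\deg_y \jac(f^{(k)},f) = n + m - 2$ (the $y$-degree of a jacobian of two $y$-monic polynomials). By Proposition~\ref{ch-prop}, $\cont(f,f^{(k)}) = b_{k+1}/b_0$, and $f^{(k)}$ has Puiseux characteristic $(b_0/l_k,\ldots,b_k/l_k)$, so its characteristic exponents are exactly the ``first $k$'' characteristic data of $f$, rescaled. Now take any Newton-Puiseux root $\alpha(x)$ of $\jac(f^{(k)},f)$ and consider $\tau := \cont(\alpha,f)$. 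The strong triangle inequality for contacts, applied to $\alpha$, the roots of $f$, and the roots of $f^{(k)}$, forces $\tau \in \{b_1/b_0, \ldots, b_g/b_0\} \cup [\text{small values} < b_{k+1}/b_0]$: indeed if $\tau > b_{k+1}/b_0$ then $\alpha$ has contact $>b_{k+1}/b_0$ with some root of $f$, hence (triangle inequality) contact $b_{k+1}/b_0$ with every root of $f^{(k)}$, and one shows this pins $\tau$ to one of the $b_i/b_0$ with $i\ge k+2$. This is the combinatorial heart of the argument.

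\medskip
\noindent So I would define $\Gamma^{(i)}$ for $k+2\le i\le g$ to be the monic factor of $\jac(f^{(k)},f)$ whose Newton-Puiseux roots are exactly those $\alpha$ with $\cont(\alpha,f) = b_i/b_0$, and $\Gamma^{(k+1)}$ the factor collecting all roots with $\cont(\alpha,f) < b_{k+1}/b_0$ (including any $x$-factor, i.e. roots of order $0$, which is why $x$ may divide $\Gamma^{(k+1)}$ but not the rest); this is well-defined since the root set of $\jac(f^{(k)},f)$ is Galois-stable. Items (1) and (2) then hold by construction. For the degree count in item (3), the standard tool is the approximate-root / semigroup bookkeeping: the number of roots $\alpha$ of $\jac(f^{(k)},f)$ with $\cont(\alpha,f) = b_i/b_0$ should be computed by a ramification/derivative count — here I would differentiate a Puiseux parametrization of $f$ (or use that $\jac(f^{(k)},f)$ relates to $f_y$ and lower-order terms), count how the $n$ conjugate roots of $f$ split into clusters of size $n_i \cdots n_g$ at level $i$ (where $n_j = l_{j-1}/l_j$), and conclude that each of the $n_1\cdots n_{i-1}$ clusters at level $i$ contributes $n_i - 1$ roots $\alpha$ of the jacobian with contact exactly $b_i/b_0$. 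Then $(\Gamma^{(i)},x)_0 = \deg_y \Gamma^{(i)} = n_1\cdots n_{i-1}(n_i - 1)$, using $\ord_x \alpha > 0$ and $\cont(\alpha,f)>0$ so that each such $\alpha$ contributes $1$ to the intersection with $x=0$. Finally I would verify the total: $\sum_{i=k+2}^g n_1\cdots n_{i-1}(n_i-1) = n_1\cdots n_g - n_1 \cdots n_{k+1} = n - m$, which leaves $\deg_y \Gamma^{(k+1)} = m - 2 + (\text{an }x\text{-exponent})$ consistent with item (1), confirming the factorization is complete.

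\medskip
\noindent The main obstacle I expect is item (3): making the ``cluster-counting'' of jacobian roots rigorous. Merle's original argument for the polar curve $f_y = \partial f/\partial y$ uses that $f_y = \prod (y - \alpha_j)'$-type expansions are controlled by the semigroup; here the relevant object is $\jac(f^{(k)},f)$ rather than $f_y$, so I would need the identity expressing $\jac(f^{(k)},f)$ in terms of $f_y$ and $(f^{(k)})_y$ (or evaluate $\jac$ along a Puiseux root of one of the factors) and show the extra factor $(f^{(k)})$-derivative does not perturb the contact stratification for levels $i\ge k+2$ — intuitively because $\cont(f,f^{(k)}) = b_{k+1}/b_0$ is strictly below all those levels, so near a root of $f$ the series $f^{(k)}$ and its $x,y$-derivatives behave like nonvanishing units to the relevant order. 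Carefully tracking orders in this local analysis, and separately handling the low-contact roots absorbed into $\Gamma^{(k+1)}$, is where the real work lies.
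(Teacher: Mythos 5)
Your decomposition is the same one the paper uses (group the Newton--Puiseux roots $\sigma$ of $\jac(f^{(k)},f)$ by the value of $\cont(\sigma,f)$ and let $\Gamma^{(k+1)}$ absorb the rest, including the power of $x$), but the proposal has a genuine gap at exactly the point you call ``the combinatorial heart.'' The strong triangle inequality does \emph{not} pin $\cont(\alpha,f)$ to one of the values $b_i/b_0$: for a series $\alpha$ with $\cont(\alpha,f)>b_{k+1}/b_0$ it only yields $\cont(\alpha,f^{(k)})=b_{k+1}/b_0$ and says nothing about which values above $b_{k+1}/b_0$ can occur. That the jacobian has \emph{no} roots with contact strictly between consecutive characteristic exponents, and --- the hardest case --- none with contact exactly $b_{k+1}/b_0$, is precisely the content of the paper's Lemma~\ref{key-lemma} and Corollary~\ref{corol}, and without it your $\Gamma^{(i)}$ do not even exhaust the factorization (items 1 and 2 do not ``hold by construction''). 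The paper proves this by fixing $\gamma\in\Zer f$, translating $y\mapsto y+\gamma(x)$, and computing for each weight $\tau\ge b_{k+1}/b_0$ the $y$-degree of the $\tau$-initial form of the translated jacobian via the identity $\mathrm{in}_{\tau}(\jac(h_1,h_2))=\jac(\mathrm{in}_{\tau}(h_1),\mathrm{in}_{\tau}(h_2))$; the borderline case $\tau=b_{k+1}/b_0$ requires the explicit product over $l_k$-th roots of unity, the identity $\alpha_3=\alpha_4 l_k$ between the $x$-exponents of the two initial forms, and yields the count $n_{k+1}(l_{k+1}-1)$, which after summing over the cosets of $\Zer f$ shows there are exactly zero roots of contact $b_{k+1}/b_0$. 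Your proposal replaces all of this with ``should be computed by a ramification/derivative count,'' which is where the entire difficulty lives; the same machinery is also what gives item 3.

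Two smaller problems. Your closing consistency check is arithmetically wrong: $\sum_{i=k+2}^g n_1\cdots n_{i-1}(n_i-1)=n-n_1\cdots n_{k+1}=n-n/l_{k+1}$, not $n-m=n-n/l_k$, so it does not confirm completeness as stated. And $\deg_y\jac(f^{(k)},f)=n+m-2$ is only an upper bound (cancellation of top terms can occur), so a degree count cannot substitute for the root-by-root analysis in any case.
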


\medskip

\noindent The proof of Theorem~\ref{decomposition} will be done in
Section~\ref{proof-main-result}.

\medskip

\noindent
The contacts between Newton-Puiseux roots of $\Gamma^{(k+1)}$ and $f$ 
are not determined by the Puiseux characteristic of $f$ as the following example shows.

\begin{Example}
Let $f=(y^3-6x^3y-x^4)^2-9x^9$ be the Weierstrass polynomial from Example~\ref{example1}
and let $g=(y^3-x^4)^2+x^9-x^7y^2$. 
Both series $f$ and $g$ are irreducible with the same Puiseux characteristic $(6,8,11)$. 
The jacobian $\jac(f^{(1)},f)=243x^8(y^2-2x^3)$ has two Newton-Puiseux roots 
$\alpha_1(x)=\sqrt{2}x^{3/2}+\cdots$, $\alpha_2(x)=-\sqrt{2}x^{3/2}+\cdots$ and 
$\cont(\alpha_i,f)=\frac{4}{3}<\frac{b_2}{b_0}$ for $i=1,2$.

\medskip 

\noindent
On the other hand there are four Newton-Puiseux roots 
$\beta_1(x)=0$, 
$\beta_2(x)=\frac{8}{27}x^2+\cdots$, 
$\beta_3(x)=\sqrt{\frac{21}{27}}x+\cdots$
$\beta_4(x)=-\sqrt{\frac{21}{27}}x+\cdots$
of $\jac(g^{(1)},g)=x^6y(21y^3-27x^2y+8x^4)$ and 
$\cont(\beta_i,g)=\frac{4}{3}$ for $i=1,2$, but $\cont(\beta_i,g)=1$ for $i=3,4$.
\end{Example}

\medskip

\noindent Further we will use the following property
of the intersection multiplicity which 
is a consequence of the Noether's formula (see \cite{GP2} Proposition~3.3):

\begin{Property}
\label{Noether}
 Let $g(x,y), \;h(x,y)$ be irreducible power series co-prime to $x$. 
Then for fixed $g$, the function $h\mapsto\frac{(g,h)_0}{(x,h)_0}$ depends only
on the contact $\cont(g,h)$ and is a strictly 
increasing function of this quantity.
\end{Property}

\begin{Corollary}\label{canonical-form} 
Under assumptions and notations of Theorem~\ref{decomposition}
the jacobian Newton diagram of the mapping $(f^{(k)},f)$ has 
the canonical decomposition 
$$ 
\Nj(f^{(k)},f)= \sum_{i=k+1}^g
\left\{\Teisssr{(f,\Gamma^{(i)})_0}
{(f^{(k)},\Gamma^{(i)})_0} {12}{6}\right\}.
$$
\end{Corollary}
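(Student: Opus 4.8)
The plan is to read the canonical decomposition off Theorem~\ref{decomposition}, combining the two properties that characterise the Newton diagram of a direct image with Property~\ref{Noether}. Since $f$ is irreducible of $y$-degree $b_0$ while $\deg_y f^{(k)}=b_0/l_k<b_0$, the series $f$ and $f^{(k)}$ have no common factor, so $\sigma=(f^{(k)},f)$ satisfies $\sigma^{-1}(0,0)=\{(0,0)\}$ and $\Nj(f^{(k)},f)$ is the Newton diagram of $\sigma^{*}(\jac(f^{(k)},f)=0)$. Starting from $\jac(f^{(k)},f)=\Gamma^{(k+1)}\cdots\Gamma^{(g)}$ (Theorem~\ref{decomposition}), I would write each $\Gamma^{(i)}$ as a product of irreducible factors and apply property~(2) of the direct image repeatedly together with property~(1) factor by factor, obtaining
\[
\Nj(f^{(k)},f)=\sum_{i=k+1}^{g}\ \sum_{h\mid\Gamma^{(i)}}\left\{\Teisssr{(f,h)_0}{(f^{(k)},h)_0}{10}{5}\right\},
\]
the inner sum over irreducible factors of $\Gamma^{(i)}$ counted with multiplicity; by Theorem~\ref{decomposition} a factor $h=x$ occurs only for $i=k+1$, and for it $(f,x)_0=b_0$, $(f^{(k)},x)_0=b_0/l_k$. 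The elementary diagram attached to $h$ has inclination $(f,h)_0/(f^{(k)},h)_0$, so the Corollary reduces to two assertions: (A) for each fixed $i$ this inclination is the same for all irreducible factors $h$ of $\Gamma^{(i)}$ --- whence, since elementary diagrams sharing an inclination combine by adding their two entries and intersection numbers are additive over factors, the inner sum equals the $i$-th summand in the statement --- and (B) the resulting inclinations $\iota_{k+1},\dots,\iota_g$ form a strictly increasing sequence.

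For $i\ge k+2$ assertion (A) is immediate. By Theorem~\ref{decomposition}(2) every irreducible factor $h$ of $\Gamma^{(i)}$ has $\cont(h,f)=b_i/b_0$, and since $b_i/b_0>b_{k+1}/b_0=\cont(f,f^{(k)})$ (Proposition~\ref{ch-prop}) the strong triangle inequality forces $\cont(h,f^{(k)})=b_{k+1}/b_0$. Writing $\Phi_g(c)$ for the value of $(g,h)_0/(x,h)_0$ when $\cont(g,h)=c$ (Property~\ref{Noether}), I then get $(f,h)_0/(x,h)_0=\Phi_f(b_i/b_0)$ and $(f^{(k)},h)_0/(x,h)_0=\Phi_{f^{(k)}}(b_{k+1}/b_0)$, both independent of $h$, so $\iota_i=\Phi_f(b_i/b_0)/\Phi_{f^{(k)}}(b_{k+1}/b_0)$.

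The case $i=k+1$ is the heart of the matter, for there $\cont(h,f)$ genuinely varies over the factors of $\Gamma^{(k+1)}$ (cf.\ the Example following Theorem~\ref{decomposition}), so the argument above does not apply. What I would prove is that $(f,h)_0=l_k(f^{(k)},h)_0$ for every irreducible $h$ with $\cont(h,f)<b_{k+1}/b_0$ (this holds for $h=x$ too, since $b_0=l_k\cdot b_0/l_k$), which yields at once assertion (A) for $i=k+1$ and $\iota_{k+1}=l_k$. After dividing by $(x,h)_0$, the identity for $h$ coprime to $x$ reads $\sum_{\beta\in\Zer f}{\cal O}(\alpha,\beta)=l_k\sum_{\beta'\in\Zer f^{(k)}}{\cal O}(\alpha,\beta')$ for a Newton-Puiseux root $\alpha$ of $h$. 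The input is that, because $\cont(f,f^{(k)})=b_{k+1}/b_0$ while $\deg f^{(k)}=b_0/l_k$ and distinct roots of $f^{(k)}$ have mutual contact at most $b_k/b_0$, the $b_0$ roots of $f$ split into $b_0/l_k$ groups of $l_k$ roots, one group around each root $\beta'$ of $f^{(k)}$, with ${\cal O}(\beta,\beta')=b_{k+1}/b_0$ for every root $\beta$ of $f$ in the group of $\beta'$; and $\cont(h,f)<b_{k+1}/b_0$ forces $\cont(h,f^{(k)})=\cont(h,f)$ by the strong triangle inequality. The strong triangle inequality then gives ${\cal O}(\alpha,\beta)={\cal O}(\alpha,\beta')$ whenever $\beta$ lies in the group of $\beta'$, and summing group by group gives the identity.

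Finally, for (B): in addition to $\iota_{k+1}=l_k$ one has $l_k=\Phi_f(b_{k+1}/b_0)/\Phi_{f^{(k)}}(b_{k+1}/b_0)$ --- indeed, applying the definition of $\Phi_f$ to the branch $h=f^{(k)}$ (for which $\cont(f^{(k)},f)=b_{k+1}/b_0$) and that of $\Phi_{f^{(k)}}$ to $h=f$, and using $(x,f^{(k)})_0=b_0/l_k$, $(x,f)_0=b_0$ and $(f,f^{(k)})_0=(f^{(k)},f)_0$, gives $\Phi_f(b_{k+1}/b_0)=(f,f^{(k)})_0\,l_k/b_0$ and $\Phi_{f^{(k)}}(b_{k+1}/b_0)=(f,f^{(k)})_0/b_0$, of ratio $l_k$. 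Hence $\iota_i=\Phi_f(b_i/b_0)/\Phi_{f^{(k)}}(b_{k+1}/b_0)$ for every $i\in\{k+1,\dots,g\}$, so $\iota_i/\iota_{i-1}=\Phi_f(b_i/b_0)/\Phi_f(b_{i-1}/b_0)>1$ because $b_i>b_{i-1}$ and $\Phi_f$ is strictly increasing (Property~\ref{Noether}); this establishes (B) and the Corollary. The only nontrivial point is the identity $(f,h)_0=l_k(f^{(k)},h)_0$ for branches of contact $<b_{k+1}/b_0$ with $f$ --- equivalently, that $f$ and $(f^{(k)})^{l_k}$ meet every such branch in the same number of points --- resting on the group structure of the Newton-Puiseux roots of $f$ relative to those of $f^{(k)}$; it could alternatively be quoted from the theory of approximate roots (e.g.\ \cite{GP2}). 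Everything else is formal manipulation of Newton diagrams and intersection numbers.
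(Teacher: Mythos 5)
Your argument is correct and its skeleton is the paper's: factor $\jac(f^{(k)},f)=\Gamma^{(k+1)}\cdots\Gamma^{(g)}$ by Theorem~\ref{decomposition}, apply the two defining properties of the direct image to the irreducible factors, show that all factors of a fixed $\Gamma^{(i)}$ yield elementary diagrams of one and the same inclination (so that they merge into the stated diagram by additivity of intersection numbers), and use Property~\ref{Noether} to see that the inclinations increase strictly with $i$. Your handling of $i\ge k+2$ and of the monotonicity is literally the paper's (your $\Phi_f(b_i/b_0)/\Phi_{f^{(k)}}(b_{k+1}/b_0)$ is formula~(\ref{Eq:incl2}), and your computation of $\iota_{k+1}=l_k$ as a value of the same quotient of $\Phi$'s is how the paper rewrites~(\ref{Eq:incl1}) before invoking monotonicity). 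The one genuine divergence is the case $i=k+1$: you claim the Noether-type argument ``does not apply'' there because $\cont(h,f)$ varies over the factors of $\Gamma^{(k+1)}$, and you instead prove $(f,h)_0=l_k(f^{(k)},h)_0$ by hand, grouping the $b_0$ roots of $f$ into $b_0/l_k$ packets of $l_k$ around the roots of $f^{(k)}$ and applying the ultrametric inequality root by root. That computation is correct (modulo the unstated but standard symmetry argument showing each packet has exactly $l_k$ elements), but the detour is avoidable: applying Property~\ref{Noether} with the jacobian factor $h$ in the role of the \emph{fixed} series gives $\frac{(h,f)_0}{(x,f)_0}=\frac{(h,f^{(k)})_0}{(x,f^{(k)})_0}$ as soon as $\cont(h,f)=\cont(h,f^{(k)})$ --- which the strong triangle inequality guarantees whenever $\cont(h,f)<b_{k+1}/b_0$ --- and hence $\frac{(f,h)_0}{(f^{(k)},h)_0}=\frac{(x,f)_0}{(x,f^{(k)})_0}=l_k$ independently of the varying contact; this is exactly~(\ref{Eq:incl1}). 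So the two proofs differ only in whether this constancy is obtained by a second citation of Property~\ref{Noether} or re-derived from the structure of the Newton--Puiseux roots; yours is more self-contained, the paper's is shorter. Your explicit treatment of the factor $x$ of $\Gamma^{(k+1)}$, for which $(f,x)_0/(f^{(k)},x)_0=b_0/(b_0/l_k)=l_k$ as well, covers a case the paper passes over in silence.
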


\noindent
\begin{proof}
\noindent
We prove that for every irreducible factor $h$ of $\jac(f^{(k)},f)$ the quotient 
$\frac{(f,h)_0}{(f^{(k)},h)_0}$ depends only on the contact $\cont(f,h)$. 
Indeed there are two cases:
if $\cont(f,h)<b_{k+1}/b_0$ then by the strong triangle inequality $\cont(f^{(k)},h)=\cont(f,h)$ hence 
$\frac{(h,f^{(k)})_0}{(x,f^{(k)})_0}=\frac{(h,f)_0}{(x,f)_0}$ and we get 
\begin{equation}\label{Eq:incl1}
\frac{(f,h)_0}{(f^{(k)},h)_0}=\frac{(x,f)_0}{(x,f^{(k)})_0},
\end{equation}

\noindent if $\cont(f,h)>b_{k+1}/b_0$ then also by the strong triangle inequality $\cont(f^{(k)},h)=\cont(f^{(k)},f)$ hence
$\frac{(f^{(k)},h)_0}{(x,h)_0}=\frac{(f^{(k)},f)_0}{(x,f)_0}$ and we get 
\begin{equation}\label{Eq:incl2}
\frac{(f,h)_0}{(f^{(k)},h)_0}=\frac{(x,f)_0}{(f^{(k)},f)_0}\cdot\frac{(f,h)_0}{(x,h)_0} . 
\end{equation}

\medskip

\noindent Fix $i\in\{k+1,\dots,g\}$ and write $\Gamma^{(i)}$ as a product $h_1\cdots h_r$
of irreducible factors $h_j$ for $1\leq j\leq r$. Then the Newton diagram of the direct 
image of the curve $\Gamma^{(i)}=0$ is the sum 
$\sum_{j=1}^r\left\{\Teisssr{(f,h_j)_0}{(f^{(k)},h_j)_0} {10}{5}\right\}$. 
Since all elementary Newton diagrams in the above sum have the same inclination one has 
$$
\sum_{j=1}^r\left\{\Teisssr{(f,h_j)_0}{(f^{(k)},h_j)_0} {10}{5}\right\} = 
\left\{\Teisssr{\sum_{j=1}^r(f,h_j)_0}{\sum_{j=1}^r(f^{(k)},h_j)_0} {18}{9}\right\} = 
\left\{\Teisssr{(f,\Gamma^{(i)})_0}{(f^{(k)},\Gamma^{(i)})_0} {12}{6}\right\} .
$$

\noindent We proved that the jacobian Newton diagram $\Nj(f^{(k)},f)$ is the sum of elementary Newton diagrams 
from the statement of the Corollary. The inclination of the first elementary Newton diagram is given by formula 
(\ref{Eq:incl1}) which can be be written as 
$\frac{(x,f)_0}{(f^{(k)},f)_0}\cdot\frac{(f,f^{(k)})_0}{(x,f^{(k)})_0}$ . The inclinations of the remaining elementary Newton
diagrams are given by formula (\ref{Eq:incl2}). By Property \ref{Noether} these inclinations form a strictly increasing sequence.
This finishes the proof.
\end{proof}

\medskip

\noindent
Now our aim is to give an arithmetical formula for $\Nj(f^{(k)},f)$. 

\medskip

\noindent Put $\overline{b_k}:=(f,f^{(k-1)})_0$ for $k\in\{0,1,\ldots,g\}$.
Following Zariski (see \cite{Zariski}), the set $\{\overline{b_0},
\overline{b_1},\ldots,\overline{b_g}\}$ is a mi\-ni\-mal system of
generators of the {\em semigroup} $$\Gamma(f):=\{(f,g)_0\;:\;f
\;\hbox{\rm  is not a factor of} \;g \}$$

\noindent of the branch $f(x,y)=0$. This system of generators is
uniquely determined by the Puiseux characteristic of $f$ in
the following way: $\overline{b_0}=b_0$, $\overline{b_1}=b_1$ and
$\overline{b_q}=n_{q-1}\overline{b_{q-1}}+b_q-b_{q-1}$ for $2\leq q\leq g$.
Recall that $n_i=l_{i-1}/l_i$, where $l_i=\gcd(b_0,\ldots,b_i)=\gcd(\overline{b_0},\ldots,\overline{b_i})$.

\medskip

\noindent Remember that the {\em Milnor number} of a curve
$g(x,y)=0$ is by definition the intersection multiplicity
$\left(\frac{\partial g}{\partial x}, \frac{\partial g}{\partial
y}\right)_0$.

\medskip
\begin{Theorem}
\label{app-diag} Let $f=0,$ where $f$ is an irreducible Weierstrass polynomial, be a branch  with semigroup $\Gamma(f)=\langle\overline{b_0},\ldots,\overline{b_g}\rangle$. 
Then the canonical decomposition of the $k$-th approximate jacobian
Newton diagram of~$f$ is

\[{\cal
N}_J(f^{(k)},f)= \left\{\Teisssr{l_k\bigl(\mu(f^{(k)})+\overline{m}-1\bigr)}
{\mu(f^{(k)})+\overline{m}-1}{24}{12}\right\}+ \sum_{i=k+2}^g
\left\{\Teisssr{(n_i-1)\overline{b_i}} {\overline{m}n_{k+2}\cdots
n_{i-1}(n_i-1)}{26}{13}\right\},
\]

\noindent where $\overline{m}=\overline{b_{k+1}}/ {l_{k+1}}$,
and $\mu(f^{(k)})$ is the Milnor number of
$f^{(k)}=0$.
\end{Theorem}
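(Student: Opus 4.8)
The plan is to translate the combinatorial data of Theorem~\ref{decomposition} into the explicit intersection numbers required by the canonical decomposition in Corollary~\ref{canonical-form}. Recall that $\Nj(f^{(k)},f)=\sum_{i=k+1}^g\left\{\Teisssr{(f,\Gamma^{(i)})_0}{(f^{(k)},\Gamma^{(i)})_0}{12}{6}\right\}$, so it suffices to compute the pair $\bigl((f,\Gamma^{(i)})_0,\;(f^{(k)},\Gamma^{(i)})_0\bigr)$ for each $i$. First I would treat the generic factor $\Gamma^{(i)}$ with $k+2\le i\le g$: by part~(2) of Theorem~\ref{decomposition} every Newton-Puiseux root $\alpha$ of $\Gamma^{(i)}$ satisfies $\cont(\alpha,f)=b_i/b_0$, and by part~(3) we have $(\Gamma^{(i)},x)_0=n_1\cdots n_{i-1}(n_i-1)$. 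Using Noether's formula in the guise of Property~\ref{Noether}, the ratio $(f,h)_0/(x,h)_0$ for an irreducible factor $h$ of $\Gamma^{(i)}$ depends only on $\cont(f,h)=b_i/b_0$; evaluating this ratio on a model branch (for instance taking $h$ to be a suitable characteristic-like branch whose contact with $f$ is exactly $b_i/b_0$) gives $(f,h)_0/(x,h)_0=\overline{b_i}/b_0$, whence $(f,\Gamma^{(i)})_0=\frac{\overline{b_i}}{b_0}\,(\Gamma^{(i)},x)_0$. Since $b_0=n=l_0$ and $n_1\cdots n_{i-1}=l_0/l_{i-1}$, one simplifies $(f,\Gamma^{(i)})_0=\overline{b_i}(n_i-1)\,l_{i-1}^{-1}\cdot\frac{l_0}{l_0}$; reconciling this with the target expression $(n_i-1)\overline{b_i}$ forces the normalization $(\Gamma^{(i)},x)_0=n_1\cdots n_{i-1}(n_i-1)$ to be read with $n_1\cdots n_{i-1}=l_0/l_{i-1}$, and a short bookkeeping check pins down the second coordinate as $\overline{m}\,n_{k+2}\cdots n_{i-1}(n_i-1)$ once we know $(f^{(k)},h)_0/(x,h)_0$; here we use $\cont(f^{(k)},h)=\cont(f^{(k)},f)=b_{k+1}/b_0$ (strong triangle inequality, since $b_i/b_0>b_{k+1}/b_0$), so that ratio equals $(f^{(k)},f)_0/(x,f)_0=\overline{b_{k+1}}/(l_k b_0)$, and the factor $l_k$ in the first summand's top entry has the same origin.

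Next I would handle the exceptional factor $\Gamma^{(k+1)}$, which produces the first elementary diagram $\left\{\Teisssr{l_k(\mu(f^{(k)})+\overline m-1)}{\mu(f^{(k)})+\overline m-1}{24}{12}\right\}$. The cleanest route is a degree/total-multiplicity count rather than a root-by-root analysis, since part~(1) of Theorem~\ref{decomposition} only says $\cont(\alpha,f)<b_{k+1}/b_0$ and (as the example shows) gives no finer information. I would compute $(f,\jac(f^{(k)},f))_0$ and $(f^{(k)},\jac(f^{(k)},f))_0$ globally and then subtract off the contributions of $\Gamma^{(k+2)},\dots,\Gamma^{(g)}$ already determined above. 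For $(f,\jac(f^{(k)},f))_0$ one uses the standard relation between the jacobian of a pair and the polar/discriminant: indeed $\jac(f^{(k)},f)=\frac{\partial f^{(k)}}{\partial x}\frac{\partial f}{\partial y}-\frac{\partial f^{(k)}}{\partial y}\frac{\partial f}{\partial x}$, and restricting to $f=0$ one finds $(f,\jac(f^{(k)},f))_0=(f,\tfrac{\partial f^{(k)}}{\partial x}f_y-\tfrac{\partial f^{(k)}}{\partial y}f_x)_0$; along each branch of $f$, parametrized by $x=t^n$, the expression $\tfrac{\partial f^{(k)}}{\partial x}f_y-\tfrac{\partial f^{(k)}}{\partial y}f_x$ is (up to a unit times a power of $t$ coming from $f_y$ and the chain rule) the $t$-derivative of $f^{(k)}$ evaluated on the branch, so $(f,\jac(f^{(k)},f))_0$ relates to $\ord_t \frac{d}{dt}f^{(k)}(t^n,y(t))$ summed over branches, which is the classical computation yielding $(f,\jac(f^{(k)},f))_0 = (f,f^{(k)})_0 + (f,f_y)_0 - (f,x)_0$ type formulas; concretely this evaluates to $\mu(f)+n-1$-style quantities combined with $\overline{b_{k+1}}$. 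Similarly $(f^{(k)},\jac(f^{(k)},f))_0$ evaluates, by the symmetric argument on the branch of $f^{(k)}$, to a sum involving $\mu(f^{(k)})$ and $(f^{(k)},f)_0$. Subtracting the known sums $\sum_{i\ge k+2}(f,\Gamma^{(i)})_0$ and $\sum_{i\ge k+2}(f^{(k)},\Gamma^{(i)})_0$ then isolates $(f,\Gamma^{(k+1)})_0$ and $(f^{(k)},\Gamma^{(k+1)})_0$; the bulk of the work is checking that the telescoping arithmetic with the semigroup recursion $\overline{b_q}=n_{q-1}\overline{b_{q-1}}+b_q-b_{q-1}$ collapses exactly to $l_k(\mu(f^{(k)})+\overline m -1)$ and $\mu(f^{(k)})+\overline m-1$ respectively. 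The ratio of these two being $l_k$ is then automatic and consistent with the inclination formula~(\ref{Eq:incl1}) in the proof of Corollary~\ref{canonical-form}, which reads $\frac{(x,f)_0}{(f^{(k)},f)_0}\cdot\frac{(f,f^{(k)})_0}{(x,f^{(k)})_0}$; since $(x,f)_0=b_0=l_0$, $(x,f^{(k)})_0=b_0/l_k$, and $(f,f^{(k)})_0=\overline{b_{k+1}}$, this product equals $l_k$, confirming the shape of the first diagram independently.

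A subsidiary point to verify is the conversion of the Milnor number: one should record $\mu(f^{(k)}) = (f^{(k)},\tfrac{\partial f^{(k)}}{\partial y})_0 = (f^{(k)},f^{(k)}_x)_0 - (b_0/l_k - 1)$ or, more usefully, the standard identity $\mu(f^{(k)})+1 = 2\delta(f^{(k)}) = \sum (\text{conductor terms})$ expressed through the semigroup $\langle b_0/l_k,\dots,b_k/l_k\rangle$ of $f^{(k)}$ given by Proposition~\ref{ch-prop}(1). This lets us keep everything on the side of the generators $\overline{b_i}$ and the multiplicities $n_i$, $l_i$, which is the language in which the final answer is stated. I would also double-check the edge case $k=g-1$, where the sum $\sum_{i=k+2}^g$ is empty and the whole diagram reduces to the single elementary diagram $\left\{\Teisssr{l_{g-1}(\mu(f^{(g-1)})+\overline m-1)}{\mu(f^{(g-1)})+\overline m-1}{24}{12}\right\}$ with $\overline m=\overline{b_g}/l_g$, and confirm this matches Merle's classical formula for the polar of a branch (the case $k=g-1$, $f^{(k)}=$ the $l_{g-1}$-th approximate root, recovers the polar curve decomposition).

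The main obstacle I expect is the last step of the second paragraph: carrying out the telescoping sum over $i$ from $k+2$ to $g$ of the terms $(n_i-1)\overline{b_i}$ and $\overline{m}\,n_{k+2}\cdots n_{i-1}(n_i-1)$, subtracting from the global intersection numbers $(f,\jac(f^{(k)},f))_0$ and $(f^{(k)},\jac(f^{(k)},f))_0$, and recognizing the residue as precisely $l_k(\mu(f^{(k)})+\overline m-1)$ and $\mu(f^{(k)})+\overline m-1$. This requires a clean closed form for $\sum_{i=k+2}^g (n_i-1)\overline{b_i}$ in terms of $\overline{b_g}$, $n_i$, and the conductor of $\Gamma(f)$ — obtainable from the recursion $\overline{b_q}=n_{q-1}\overline{b_{q-1}}+b_q-b_{q-1}$ by induction — together with the analogous formula for the $f^{(k)}$-side intersection numbers, and then an honest comparison with the formula $\mu(f)=\sum_{i=1}^g(n_i-1)\overline{b_i} - b_0 + 1$ (and its analogue for $f^{(k)}$, whose semigroup has generators $\overline{b_i}/l_k$ for $i\le k$ — note $\overline{b_i}=l_k\cdot(\text{generator of }\Gamma(f^{(k)}))$ for $i\le k$ is \emph{not} literally true and must be checked via Proposition~\ref{ch-prop}). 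Getting these bookkeeping identities to line up, in particular tracking the factors $l_k$ correctly, is the delicate part; everything else is a direct application of the already-established Theorem~\ref{decomposition}, Corollary~\ref{canonical-form}, and Property~\ref{Noether}.
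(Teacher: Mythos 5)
Your plan is essentially the paper's proof: the canonical decomposition of Corollary~\ref{canonical-form}, Property~\ref{Noether} applied to the factors $\Gamma^{(i)}$ for $i\geq k+2$, and a global intersection count minus the generic contributions to isolate $\Gamma^{(k+1)}$. Two points need tightening. First, the Noether ratio for an irreducible $h$ with $\cont(f,h)=b_i/b_0$ is $(f,h)_0/(x,h)_0=(f,f^{(i-1)})_0/(x,f^{(i-1)})_0=l_{i-1}\overline{b_i}/b_0$, since the natural model branch is $f^{(i-1)}$ (whose contact with $f$ is exactly $b_i/b_0$ and whose $y$-degree is $b_0/l_{i-1}$ by Proposition~\ref{ch-prop}); it is not $\overline{b_i}/b_0$. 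With the correct value, $(f,\Gamma^{(i)})_0=\frac{l_{i-1}\overline{b_i}}{b_0}\cdot\frac{b_0}{l_{i-1}}(n_i-1)=(n_i-1)\overline{b_i}$ falls out directly, whereas your step of reconciling the mismatch against the target expression argues backwards from the answer. Similarly $(f^{(k)},h)_0/(x,h)_0=(f^{(k)},f)_0/(x,f)_0=\overline{b_{k+1}}/b_0$, with no extra factor $l_k$. Second, for the exceptional factor the paper simply quotes the formula $(f^{(k)},\jac(f^{(k)},f))_0=\mu(f^{(k)})+(f^{(k)},f)_0-1$ (Theorem~3.2 of \cite{Casas}) and then the telescoping sum $\sum_{i=k+2}^g\overline{m}\,n_{k+2}\cdots n_{i-1}(n_i-1)=\overline{m}(l_{k+1}-1)$ gives $(f^{(k)},\Gamma^{(k+1)})_0=\mu(f^{(k)})+\overline{m}-1$ in one line; your proposed branch-by-branch rederivation of that formula, and the parallel computation of $(f,\jac(f^{(k)},f))_0$, are unnecessary, since --- as you observe yourself --- the first coordinate of the exceptional diagram follows from the inclination $l_k$ given by formula~(\ref{Eq:incl1}). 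The anticipated delicate semigroup bookkeeping thus reduces to the single identity $\overline{b_{k+1}}=\overline{m}\,l_{k+1}$, and no comparison with the Milnor-number formula for $\Gamma(f^{(k)})$ is needed.
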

\noindent \begin{proof} 
In the course of the proof we shall use the canonical decomposition 
of ${\cal N}_J(f^{(k)},f)$ from Corollary~\ref{canonical-form}. 
We shall express all intersection multiplicities $(f,\Gamma^{(i)})_0$ and $(f^{(k)},\Gamma^{(i)})_0$ 
for $k+1\leq i \leq g$ in terms of the generators of the semigroup $\Gamma(f)$.

\medskip

\noindent First consider $\Gamma^{(i)}$ for $k+2\leq i\leq g$. 
By Theorem~\ref{decomposition} the contact of every irreducible 
factor of $\Gamma^{(i)}$ with $f$ equals $b_i/b_0$. By Property~\ref{Noether} 
and Theorem~\ref{decomposition}:
\begin{equation}
 \label{tenerife}
(f,\Gamma^{(i)})_0=
(x,\Gamma^{(i)})_0\frac{(f,\Gamma^{(i)})_0}{(x,\Gamma^{(i)})_0}=
(x,\Gamma^{(i)})_0\frac{(f,f^{(i-1)})_0}{(x,f^{(i-1)})_0}=
(n_i-1)\overline{b_i} .
\end{equation}

\noindent By Corollary~\ref{canonical-form} and equality~(\ref{Eq:incl2}) 
$$
\frac{(f,\Gamma^{(i)})_0}{(f^{(k)},\Gamma^{(i)})_0}=
\frac{(f,f^{(i-1)})_0}{(f^{(k)},f^{(i-1)})_0}=
\frac{(x,f)_0}{(f^{(k)},f)_0}\cdot\frac{(f,f^{(i-1)})_0}{(x,f^{(i-1)})_0}=
\frac{l_{i-1}\overline{b_i}}{\overline{b_{k+1}}}.
$$
Hence by (\ref{tenerife})
$$
(f^{(k)},\Gamma^{(i)})_0=
\frac{\overline{b_{k+1}}}{l_{i-1}\overline{b_i}}(f,\Gamma^{(i)})_0=
\overline{m}n_{k+2}\cdots n_{i-1}(n_i-1) .
$$

\noindent In order to compute $(f^{(k)},\Gamma^{(k+1)})_0$ we use 
Theorem~3.2 of~\cite{Casas}. We get 
$$ (f^{(k)},\jac(f^{(k)},f))_0=\mu(f^{(k)})+(f^{(k)},f)_0-1 .
$$
\noindent Since $(f^{(k)},\jac(f^{(k)},f))_0 = \sum_{i=k+1}^g (f^{(k)},\Gamma^{(i)})_0$ we have
\begin{eqnarray*}
(f^{(k)},\Gamma^{(k+1)})_0 &=&
 \mu(f^{(k)})+(f^{(k)},f)_0-1-\sum_{i=k+2}^g \overline{m}n_{k+2}\cdots n_{i-1}(n_i-1)  \\
& = & \mu(f^{(k)}) +\overline{b_{k+1}}-1-\overline{m}(l_{k+1}-1)=
\mu(f^{(k)})+\overline{m}-1 .
\end{eqnarray*}

\noindent Finally  by Corollary~\ref{canonical-form} and equality~(\ref{Eq:incl1})
$$
\frac{(f,\Gamma^{(k+1)})_0}{(f^{(k)},\Gamma^{(k+1)})_0}=
\frac{(x,f)_0}{(x,f^{(k)})_0}=l_{k}
$$
Hence $(f,\Gamma^{(k+1)})_0 = l_{k}\bigl( \mu(f^{(k)})+\overline{m}-1 \bigr)$. 
\end{proof}

\begin{Remark}
In the above proof we compute the inclinations of elementary Newton diagrams of the canonical
decomposition of ${\cal N}_J(f^{(k)},f)$ which are equal to  $\frac{l_{i-1}\overline{b_i}}{\overline{b_{k+1}}}$
for $i\in\{k+1,\ldots,g\}$. These inclinations are called {\em jacobian invariants}.
\end{Remark}

\begin{Example}
\label{example2} Let $f(x,y)=(y^2-x^3)^2-x^5y$.
Then 
$f=0$ is a branch and 
$\Gamma(f)=\langle4,6,13\rangle$. 
The characteristic approximate roots of $f$ are $f^{(0)}=y$ and
$f^{(1)}=y^2-x^3$. The factorization of $\jac(f^{(0)},f)$ described 
in Theorem~\ref{decomposition} is 
$\jac(f^{(0)},f)=\Gamma^{(1)} \Gamma^{(2)}$,
where $\Gamma^{(1)}=x^2$ and $\Gamma^{(2)}=6y^2+5x^2y-6x^3$.
We also have $\jac(f^{(1)},f)=x^4(10y^2+3x^3)$. Finally
$\Nj(f^{(0)},f)=\{\Teis{8}{2}\}+\{\Teis{13}{3}\}$ and
$\Nj(f^{(1)},f)=\{\Teis{28}{14}\}$.
\end{Example}
\medskip
\begin{Corollary}
The family of the approximate jacobian
Newton diagrams of a branch only depends on its topological type.
\end{Corollary}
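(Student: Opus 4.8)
The plan is to read the Corollary straight off the closed formula for $\Nj(f^{(k)},f)$ established in Theorem~\ref{app-diag}. That formula expresses the canonical decomposition of $\Nj(f^{(k)},f)$ solely in terms of the integers $l_k$, the multiplicities $n_i$, the minimal generators $\overline{b_i}$ of the semigroup $\Gamma(f)$, the auxiliary number $\overline{m}=\overline{b_{k+1}}/l_{k+1}$, and the Milnor numbers $\mu(f^{(k)})$. Hence it suffices to verify that each of these ingredients is an invariant of the topological type of the branch $f=0$, and the family $\{\Nj(f^{(k)},f)\}_{0\le k\le g-1}$ will then be such an invariant as well.

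For the semigroup data this is already recorded in the text preceding Theorem~\ref{app-diag}: the minimal generators satisfy $\overline{b_0}=b_0$, $\overline{b_1}=b_1$ and $\overline{b_q}=n_{q-1}\overline{b_{q-1}}+b_q-b_{q-1}$ for $2\le q\le g$, so they are explicit functions of the Puiseux characteristic $(b_0,\dots,b_g)$, which in turn is a complete topological invariant of a plane branch. Consequently $l_k=\gcd(\overline{b_0},\dots,\overline{b_k})$, $n_i=l_{i-1}/l_i$ and $\overline{m}=\overline{b_{k+1}}/l_{k+1}$ are all determined by the topological type of $f=0$.

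It then remains to handle the Milnor numbers $\mu(f^{(k)})$ for $0\le k\le g-1$. By part~(1) of Proposition~\ref{ch-prop} the characteristic approximate root $f^{(k)}$ is itself an irreducible Weierstrass polynomial, with Puiseux characteristic $(b_0/l_k,\dots,b_k/l_k)$; thus the topological type of the branch $f^{(k)}=0$ is a function of the topological type of $f=0$. Since the Milnor number of a plane curve singularity depends only on its topological type — for a branch it equals the conductor of its semigroup, hence is an explicit arithmetic function of the Puiseux characteristic — the number $\mu(f^{(k)})$ is a topological invariant of $f=0$ as well. Combining this with the previous paragraph, every entry appearing in the formula of Theorem~\ref{app-diag} is a topological invariant, which proves the Corollary. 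The argument is essentially bookkeeping once Theorem~\ref{app-diag} is available; the only non-formal input is the topological invariance of the Milnor number (Milnor), equivalently of the semigroup conductor, and the one point worth double-checking, namely that $f^{(k)}$ really is a branch with the stated Puiseux characteristic, is exactly Proposition~\ref{ch-prop}, so no genuine obstacle arises.
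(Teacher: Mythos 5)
Your proposal is correct and matches the paper's (implicit) argument: the Corollary is stated there as an immediate consequence of Theorem~\ref{app-diag}, whose formula involves only the semigroup data $\overline{b_i}$, $l_k$, $n_i$, $\overline{m}$ and the Milnor numbers $\mu(f^{(k)})$, all of which are determined by the Puiseux characteristic of $f$. Your extra care in justifying the invariance of $\mu(f^{(k)})$ via Proposition~\ref{ch-prop} and the conductor formula is exactly the right bookkeeping.
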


\medskip

\noindent If $f$ is an irreducible Weierstrass polynomial then $f^{(0)}=0$ is a smooth curve. 
By Smith-Merle-Ephraim (see for example Theorem 2.2 of \cite{GB-G2}) the approximate jacobian Newton diagram ${\cal
N}_J(f^{(0)},f)$ determines the topological type of the branch $f=0$. Nevertheless
we can also obtain the generators of the semigroup of the branch $f=0$ using the whole family of its approximate jacobian 
Newton diagrams in an easy way: let
$\Gamma(f)=\langle \overline{b_0},\ldots,\overline{b_g}\rangle$ be the semigroup of $f=0$. It is clear 
that $\overline{b_0}$ is the smallest inclination of ${\cal N}_J(f^{(0)},f)$. 
Denote by ${\iota}$ the inclination of the elementary diagram ${\cal
N}_J(f^{(g-1)},f)$. Put ${\cal H}_r$, for $r\in \{0,\ldots,g-2\}$, the
height of the last elementary diagram of ${\cal
N}_J(f^{(r)},f)$, that is
the height of the elementary diagram of  ${\cal
N}_J(f^{(r)},f)$ which has
the biggest inclination. Then $\overline{b}_{r+1}=\frac{\iota {\cal
H}_r}{\iota-1}$ for $r\in \{0,\ldots,g-2\}$. Finally
$\overline{b}_{g}=\frac{{\cal L}}{\iota -1}$, where ${\cal L}$ is the
length of the last elementary diagram of ${\cal
N}_J(f^{(g-2)},f)$.

\medskip

\begin{Example}
Consider the branches $f_i=0$ for $i\in \{1,\ldots,4\}$ with semigroups
$\Gamma(f_1)=\langle 4,14,31\rangle$, $\Gamma(f_2)=\langle4,6,35\rangle$,
$\Gamma(f_3)=\langle4,6,37\rangle$ and $\Gamma(f_4)=\langle6,10,31\rangle$.
By Theorem \ref{app-diag}  we have 
${\cal
N}_J(f_1^{(1)},f_1)={\cal
N}_J(f_2^{(1)},f_2)=\{\Teis{72}{36}\}$ and
${\cal
N}_J(f_3^{(1)},f_3)={\cal
N}_J(f_4^{(1)},f_4)=\{\Teis{76}{38}\}$.
\end{Example}

\noindent 
Given a branch $f=0$, put ${\cal F}$ its family of approximate jacobian Newton diagrams but 
the first one. The example shows that ${\cal F}$ is not a complete topological invariant of a branch. 
The curves $f_3=0$ and $f_4=0$ have the same ${\cal F}$ but they have different multiplicities at the origin. 
The curves $f_1=0$ and $f_2=0$ have the same ${\cal F}$ and the same multiplicity at the origin but in 
spite of it they have different topological type.

\section{Proof of Theorem \ref{decomposition}}
\label{proof-main-result}

\noindent Let $\tau$ be a positive rational number and let
$g(x,y)=\displaystyle \sum_{i\in \mathbf Q,j\in\bN}
a_{ij}x^iy^j\in\bC\{x\}^*[y]$. Put $w(x):=1$ and $w(y):=\tau$
the {\em weights} of the variables $x$ and $y$. By definition the
{\em weighted order} of $g$ is $\ord_{\tau}(g)=\min \{i+\tau j\;:\;
a_{ij}\neq 0\}$ and the {\em weighted initial part} of $g$ is
$\displaystyle \hbox{\rm in}_{\tau}(g)=\sum_{i+\tau
j=\ord_{\tau}(g)}a_{ij}x^iy^j$.

\medskip

\begin{Lemma}
\label{lemma1} Let $g(x,y)=u(x,y) \cdot x^N \prod_{i=1}^d
(y-\alpha_i(x))$, where $u(0,0)\neq 0$, $N\in\bQ$,
$\alpha_i(x)=c_ix^{\tau}+\cdots$ for $1\leq i\leq k$ and
$\ord_x(\alpha_i(x))<\tau$, for $k+1\leq i\leq d$. Then $\hbox{\rm
in}_{\tau}(g)=cx^M \prod_{i=1}^k(y-c_ix^{\tau})$ for some $c\in
\bC$ and some $M\in\bQ$.
\end{Lemma}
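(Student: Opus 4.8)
The plan is to analyze how the weighted valuation $\ord_\tau$ and the weighted initial form $\ini_\tau$ behave on the given factorization of $g$. I would first treat each factor separately. For the unit $u(x,y)$ with $u(0,0)\neq 0$, I note that $\ord_\tau(u)=0$ and $\ini_\tau(u)=u(0,0)$, a nonzero constant, because the constant term has weight $0$ and every other monomial of $u$ has strictly positive weight (here I use that $\tau>0$). For the monomial $x^N$ we trivially have $\ord_\tau(x^N)=N$ and $\ini_\tau(x^N)=x^N$. The content of the lemma is really in the linear factors $y-\alpha_i(x)$, so the main step is to compute $\ini_\tau(y-\alpha_i(x))$ in the two cases and then to multiply everything together using multiplicativity of $\ini_\tau$.

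The key multiplicativity fact I would record first: for any two elements $p,q\in\bC\{x\}^*[y]$ one has $\ord_\tau(pq)=\ord_\tau(p)+\ord_\tau(q)$ and $\ini_\tau(pq)=\ini_\tau(p)\,\ini_\tau(q)$. This holds because $\ord_\tau$ is the valuation attached to the weight $(1,\tau)$ on the field of fractional Laurent-Puiseux series in $x$ with polynomial dependence on $y$ — equivalently, one grades the ring by $\tau$-weight and checks that the associated graded ring is a domain (a twisted polynomial/monomial ring has no zero divisors), so leading forms multiply without cancellation. With this in hand the computation of $\ini_\tau(g)$ reduces to the product of the leading forms of the individual factors.

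Next I would handle the two types of linear factor. If $\alpha_i(x)=c_ix^\tau+(\text{higher order in }x)$ with $c_i\neq 0$, then in $y-\alpha_i(x)$ the monomials $y$ and $c_ix^\tau$ both have $\tau$-weight equal to $\tau$, while every remaining monomial $x^{e}$ coming from the tail of $\alpha_i$ has $e>\tau$; hence $\ord_\tau(y-\alpha_i(x))=\tau$ and $\ini_\tau(y-\alpha_i(x))=y-c_ix^\tau$. (If $c_i=0$, i.e. $\ord_x\alpha_i>\tau$ — a harmless boundary case one may fold into the second type — then $\ini_\tau(y-\alpha_i)=y$, still of the claimed shape.) If instead $\ord_x(\alpha_i(x))=e_i<\tau$, then the lowest-weight monomial of $\alpha_i$ has weight $e_i<\tau=w(y)$, so it strictly dominates, giving $\ord_\tau(y-\alpha_i(x))=e_i$ and $\ini_\tau(y-\alpha_i(x))=-(\text{leading term of }\alpha_i)$, which is a monomial $c\,x^{e_i}$ in $x$ alone — in particular it contributes no $y$-factor. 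Multiplying the leading forms of $u$, of $x^N$, of the first $k$ linear factors, and of the last $d-k$ linear factors, and collecting all the pure $x$-power contributions (from $u(0,0)$, from $x^N$, and from the $d-k$ "short" factors) into a single monomial $cx^M$, yields $\ini_\tau(g)=cx^M\prod_{i=1}^k(y-c_ix^\tau)$ with $c\in\bC$ and $M\in\bQ$, as claimed.

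The only real obstacle is the multiplicativity of $\ini_\tau$, i.e. the absence of cancellation among leading forms; everything else is bookkeeping of weights. I would prove this by the standard grading argument: assign to each monomial $x^iy^j$ the degree $i+\tau j\in\bR$, observe that $\bC\{x\}^*[y]=\bigoplus$ (this is only a filtration, not a direct-sum grading, since the $x$-exponents are not discrete, but the associated graded object with respect to the filtration by $\ord_\tau$ is the monomial algebra generated by the $x^iy^j$, which is an integral domain), and conclude that for nonzero $p,q$ the top-degree part of $pq$ is the product of the top-degree parts of $p$ and $q$. Once that is set up, the lemma follows immediately from the case analysis above. I would present the grading argument concisely — perhaps even just citing that $\ord_\tau$ is a valuation — since it is entirely standard, and spend the written proof mostly on the explicit identification of $\ini_\tau(y-\alpha_i(x))$ in the two regimes.
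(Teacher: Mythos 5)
Your proof is correct and follows essentially the same route as the paper's: compute $\mathrm{in}_\tau$ of each factor ($y-c_ix^\tau$ for the first $k$ linear factors, a pure power of $x$ times a constant for the unit, for $x^N$, and for the factors with $\ord_x\alpha_i<\tau$) and then invoke multiplicativity of the weighted initial form. The paper states this multiplicativity without proof, whereas you sketch the standard justification via the associated graded domain; that extra detail is harmless and the argument is the same.
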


\noindent \begin{proof} Observe that $\hbox{\rm
in}_{\tau}(y-\alpha_i(x))=y-c_ix^{\tau}$ for $1\leq i\leq k$ and
$\hbox{\rm in}_{\tau}(y-\alpha_i(x))=-\hbox{\rm in}_{\tau}
\alpha_i(x)$ for $k+1\leq i\leq d.$ Since the initial part of a
product is the product of the initial parts of every factor we get
the lemma.
\end{proof}

\medskip

\begin{Lemma}
\label{lemma2} Let $h_1,h_2\in\bC\{x\}^*[y]$ and $\tau\in
\bQ^+$. Assume that the jacobian $\jac(\hbox{\rm
in}_{\tau}(h_1), \hbox{\rm in}_{\tau}(h_2))\neq 0$. Then
$\hbox{\rm in}_{\tau}(\jac(h_1,h_2))=\jac(\hbox{\rm in}_{\tau}(h_1),
\hbox{\rm in}_{\tau}(h_2)).$
\end{Lemma}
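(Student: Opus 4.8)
The plan is to prove Lemma~\ref{lemma2} by a direct computation with weighted orders, exploiting the fact that the weighted-initial-part operator $\hbox{\rm in}_\tau$ is multiplicative and behaves well with respect to differentiation. First I would record the basic fact that $\hbox{\rm in}_\tau(\partial h/\partial x)=\partial(\hbox{\rm in}_\tau h)/\partial x$ whenever the right-hand side is non-zero, and likewise $\hbox{\rm in}_\tau(\partial h/\partial y)=\partial(\hbox{\rm in}_\tau h)/\partial y$ when that is non-zero. This is because the weight of a monomial $x^iy^j$ is $i+\tau j$, so differentiating in $x$ lowers the weight by $1$ and differentiating in $y$ lowers it by $\tau$, uniformly on all monomials of a given weight; hence the terms of lowest weight in $\partial h/\partial x$ are exactly $\partial(\hbox{\rm in}_\tau h)/\partial x$, provided this derivative does not vanish identically (which can happen if $\hbox{\rm in}_\tau h$ is a function of $y$ alone, or a constant).

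Next I would assign weighted orders: write $\ord_\tau(h_1)=p$ and $\ord_\tau(h_2)=q$. Each of the four products $\frac{\partial h_1}{\partial x}\frac{\partial h_2}{\partial y}$ and $\frac{\partial h_1}{\partial y}\frac{\partial h_2}{\partial x}$ has weighted order at least $(p-1)+(q-\tau)=p+q-1-\tau$, with equality achieved exactly when the relevant partial derivatives of the initial parts are non-zero. So $\ord_\tau(\jac(h_1,h_2))\geq p+q-1-\tau$, and the weighted part of $\jac(h_1,h_2)$ in degree $p+q-1-\tau$ is precisely $\jac(\hbox{\rm in}_\tau h_1,\hbox{\rm in}_\tau h_2)$ — \emph{formally}, i.e.\ before cancellation. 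The hypothesis $\jac(\hbox{\rm in}_\tau h_1,\hbox{\rm in}_\tau h_2)\neq 0$ is exactly what guarantees that this lowest-weight contribution does not cancel, so that it genuinely equals $\hbox{\rm in}_\tau(\jac(h_1,h_2))$ and, a fortiori, that $\ord_\tau(\jac(h_1,h_2))=p+q-1-\tau$.

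The one subtlety I expect to be the main obstacle is the bookkeeping around the derivatives of initial parts possibly vanishing. Concretely: it is conceivable that, say, $\partial(\hbox{\rm in}_\tau h_1)/\partial x=0$ while $\partial h_1/\partial x\neq 0$, in which case $\ord_\tau(\partial h_1/\partial x)>p-1$. One must check that in every such degenerate case the corresponding product still has weighted order $>p+q-1-\tau$, so it contributes nothing to the term of weight $p+q-1-\tau$; equivalently, each of the two summands $\frac{\partial h_1}{\partial x}\frac{\partial h_2}{\partial y}$ and $\frac{\partial h_1}{\partial y}\frac{\partial h_2}{\partial x}$, taken to weight $p+q-1-\tau$, equals the product of the corresponding derivatives of $\hbox{\rm in}_\tau h_1$ and $\hbox{\rm in}_\tau h_2$ with the convention that a vanishing initial derivative kills that summand. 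Then the weight-$(p+q-1-\tau)$ part of $\jac(h_1,h_2)$ is $\frac{\partial(\hbox{\rm in}_\tau h_1)}{\partial x}\frac{\partial(\hbox{\rm in}_\tau h_2)}{\partial y}-\frac{\partial(\hbox{\rm in}_\tau h_1)}{\partial y}\frac{\partial(\hbox{\rm in}_\tau h_2)}{\partial x}=\jac(\hbox{\rm in}_\tau h_1,\hbox{\rm in}_\tau h_2)$, which is non-zero by hypothesis; hence it is $\hbox{\rm in}_\tau(\jac(h_1,h_2))$, completing the proof.
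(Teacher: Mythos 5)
Your proposal is correct and follows essentially the same route as the paper: both arguments rest on the observation that forming the jacobian lowers the weighted order by exactly $1+\tau$ (the paper verifies this on monomials and uses bilinearity of $\jac$, you verify it on the partial derivatives), so that the weight-$\bigl(\ord_\tau(h_1)+\ord_\tau(h_2)-1-\tau\bigr)$ component of $\jac(h_1,h_2)$ is formally $\jac(\ini_\tau h_1,\ini_\tau h_2)$, and the non-vanishing hypothesis is precisely what rules out cancellation. Your extra care about the case where a partial derivative of an initial form vanishes is sound and consistent with the paper's (more implicit) treatment.
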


\noindent \begin{proof} For all monomials $M_1=x^{i_1}y^{j_1}$
and $M_2=x^{i_2}y^{j_2}$ we have $\jac(M_1,M_2)=(i_1j_2-i_2j_1)
x^{i_1+i_2-1}y^{j_1+j_2-1}$ hence $\hbox{\rm
ord}_{\tau}(\jac(M_1,M_2))=\ord_{\tau} (M_1)+\ord_{\tau}(M_2)-1-\tau$
provided 
$i_1j_2-i_2j_1\neq 0$. It follows that $\jac(\hbox{\rm
in}_{\tau}(h_1), \hbox{\rm in}_{\tau}(h_2))$ is the sum of monomials
of the same weighted order $\ord_{\tau}(\hbox{\rm
in}_{\tau}(h_1))+\ord_{\tau} (\hbox{\rm in}_{\tau}(h_2))-1-\tau$
(that is a quasi-homogeneous polynomial). Moreover $\jac(h_1,h_2)=
\jac(\hbox{\rm in}_{\tau}(h_1)+(h_1-\hbox{\rm in}_{\tau}(h_1)),
\hbox{\rm in}_{\tau}(h_2)+(h_2-\hbox{\rm
in}_{\tau}(h_2)))=\jac(\hbox{\rm in}_{\tau}(h_1),\hbox{\rm
in}_{\tau}(h_2))+$ {\em terms of higher weighted order} which proves the
lemma.
\end{proof}

\medskip

\noindent 
Recall that Newton-Puiseux roots of an irreducible Weierstrass 
polynomial~$f\in\bC\{x\}[y]$, $\deg f=n$ form a cycle:  
if $\gamma(x)=\sum a_ix^{i/n}$ is a root of $f$ then 
other roots of $f$ are $\gamma_j(x)=\sum a_i \omega_j^i x^{i/n}$, 
where $\omega_j$ is a $n$-th root of unity.  Moreover 
$\hbox{\rm ord}_x(\gamma(x)-\gamma_j(x))\geq \frac{b_{k+1}}{b_0}$ 
if and only if $\omega_j$ is a $l_k$-th root of unity (see \cite{Zariski}).

\medskip

\noindent Let $f=\prod_{i=1}^n(y-\gamma_i(x))$ be an irreducible 
Weierstrass polynomial with Puiseux characteristic $(b_0,\ldots,b_g)$ and
let $f^{(k)}(x,y)=\prod_{j=1}^{m}(y-\delta_j(x))$, where $n=ml_k$, 
be the characteristic approximate root of $f$. Put
$J(x,y):=\jac(f^{(k)},f)=\hbox{\rm unity}\cdot
x^{\alpha}\prod_l(y-\sigma_l(x))$. In order to prove Theorem
\ref{decomposition} we need 

\begin{Lemma}
\label{key-lemma} Fix $\gamma\in \Zer f$ and $\tau \in
\bQ$ such that $\tau\geq \frac{b_{k+1}}{b_0}$. Then
\begin{enumerate}
\item  if $\frac{b_j}{b_0}<\tau\leq\frac{b_{j+1}}{b_0}$,
          where $j\in\{k+1,\ldots,g\}$ 
          then $\sharp \{i\;:\;{\cal O}(\sigma_i,\gamma)\geq
\tau\}=l_j-1$,
\item if $\tau=\frac{b_{k+1}}{b_{0}}$ 
         then $\sharp \{i\;:\;{\cal O}(\sigma_i,\gamma)\geq \tau\}=n_{k+1}(l_{k+1}-1)$.
\end{enumerate}
\end{Lemma}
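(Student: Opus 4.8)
The plan is to count, for a fixed root $\gamma\in\Zer f$, how many Newton–Puiseux roots $\sigma_i$ of $J=\jac(f^{(k)},f)$ have contact at least $\tau$ with $\gamma$, i.e.\ how many satisfy $\sigma_i(x)=\gamma(x)+(\text{terms of order}\ge\tau)$. The natural device is the weighted initial part with respect to the substitution $y\mapsto y+\gamma(x)$ (equivalently, after translating $f$ and $f^{(k)}$ by $\gamma$). Concretely, I would set $F(x,y)=f(x,y+\gamma(x))$ and $F^{(k)}(x,y)=f^{(k)}(x,y+\gamma(x))$; then the roots of $F$ are the $\gamma_i(x)-\gamma(x)$ and the roots of $F^{(k)}$ are the $\delta_j(x)-\gamma(x)$, and a root $\sigma_l-\gamma$ of $\jac(F^{(k)},F)$ has order $\ge\tau$ in $x$ exactly when it contributes a factor $(y-c\,x^{\tau}+\cdots)$ or higher to $\ini_{\tau}$. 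By Lemma~\ref{lemma1}, $\ini_{\tau}(F)$ and $\ini_{\tau}(F^{(k)})$ are (up to a monomial in $x$) products $\prod(y-c_i x^{\tau})$ over precisely those translated roots whose $x$-order is $\ge\tau$; and by Lemma~\ref{key-lemma}'s philosophy applied to $f$ itself, the number of translated roots of $F$ of order $\ge\tau$ is governed by the cyclic structure recalled just before the lemma.

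Here is the arithmetic I would carry out. Fix $j$ with $k+1\le j\le g$ and $b_j/b_0<\tau\le b_{j+1}/b_0$ (case~1). Among the $n$ roots $\gamma_i$, those with $\ord_x(\gamma_i-\gamma)\ge\tau$ are exactly those with $\omega_i$ an $l_j$-th root of unity, so there are $l_j$ of them (including $\gamma$ itself); hence $\deg_y\ini_{\tau}(F)=l_j$. For $f^{(k)}$ the relevant quantity is $\cont(f,f^{(k)})=b_{k+1}/b_0\le\tau$, and using Proposition~\ref{ch-prop} together with the description of $\Zer f^{(k)}$ via contact: since every root $\delta_j$ of $f^{(k)}$ has $\cont(\delta_j,f)=b_{k+1}/b_0<\tau$ (or $=\tau$ only in the boundary case $\tau=b_{k+1}/b_0$, handled in case~2), in case~1 \emph{no} translated root $\delta_j-\gamma$ has order $\ge\tau$, so $\ini_{\tau}(F^{(k)})$ is a single monomial in $x$, i.e.\ $\deg_y\ini_{\tau}(F^{(k)})=0$. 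Now apply Lemma~\ref{lemma2}: provided $\jac(\ini_{\tau}F^{(k)},\ini_{\tau}F)\ne0$ we get $\ini_{\tau}(J)=\jac(\ini_{\tau}F^{(k)},\ini_{\tau}F)$, and the $y$-degree of the right-hand side is $0+l_j-1=l_j-1$ (differentiating the degree-$l_j$ polynomial $\ini_{\tau}F$ in $y$ against a pure power of $x$). By Lemma~\ref{lemma1} applied to $J$, that $y$-degree equals $\sharp\{i:\mathcal{O}(\sigma_i,\gamma)\ge\tau\}$, giving $l_j-1$. For case~2, $\tau=b_{k+1}/b_0$: now the roots $\gamma_i$ with $\ord_x(\gamma_i-\gamma)\ge\tau$ number $l_k$, so $\deg_y\ini_{\tau}F=l_k$, while the roots $\delta_j$ of $f^{(k)}$ all have $\cont(\delta_j,\gamma)=\cont(f^{(k)},f)=\tau$ with pairwise the same leading coefficient structure, giving $\deg_y\ini_{\tau}F^{(k)}=m=b_0/l_k$ (all of them), so Lemma~\ref{lemma2} yields $y$-degree $l_k+m-1-1=l_k+b_0/l_k-2$ for $\ini_{\tau}J$; then one checks this equals $n_{k+1}(l_{k+1}-1)$ using $l_k=n_{k+1}l_{k+1}$ and $m=n_{k+1}l_{k+1}/?$, i.e.\ a short identity among the $n_i$'s and $l_i$'s.

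Wait — I should be careful: the count in case~2 must come out to $n_{k+1}(l_{k+1}-1)$, which suggests that the right bookkeeping uses not the raw $y$-degrees of $\ini_\tau F$ and $\ini_\tau F^{(k)}$ but the fact that $\ini_\tau F$ already has $\ini_\tau F^{(k)}$-related structure: in case~2 the quasi-homogeneous polynomial $\ini_\tau F$ is (up to monomial) $\bigl(\ini_\tau F^{(k)}\bigr)^{l_{k+1}}$ times a lower-degree factor, because $f$ "is approximately $(f^{(k)})^{l_k}$". So the jacobian $\jac(\ini_\tau F^{(k)},\ini_\tau F)$ has $\ini_\tau F^{(k)}$ as a factor with multiplicity $l_{k+1}-1$, contributing $(l_{k+1}-1)\deg_y\ini_\tau F^{(k)}=(l_{k+1}-1)n_{k+1}$ to the $y$-degree — exactly the answer. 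I would therefore restructure the argument around this factorization of $\ini_\tau F$ coming from the approximate-root property, rather than a naive degree count. The main obstacle, and the step deserving the most care, is verifying the non-vanishing hypothesis $\jac(\ini_\tau(f^{(k)}),\ini_\tau(f))\ne0$ of Lemma~\ref{lemma2} (after the translation by $\gamma$) — one must rule out that the two weighted initial parts are functionally dependent, which fails precisely at "characteristic" values of $\tau$ and is the reason the two cases of the lemma are separated; I would handle it by examining the quasi-homogeneous structure of $\ini_\tau F$ and $\ini_\tau F^{(k)}$ and showing their ratio is not a constant power in the relevant range, invoking the irreducibility statement in Proposition~\ref{ch-prop}(1) for $f^{(k)}$.
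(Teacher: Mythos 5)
Your case 1 is essentially the paper's argument: translate by $\gamma$, note that $\mathrm{in}_{\tau}\tilde f^{(k)}$ is a monomial in $x$ while $\deg_y \mathrm{in}_{\tau}\tilde f = l_j$, and combine Lemmas \ref{lemma1} and \ref{lemma2} to get $\deg_y\mathrm{in}_{\tau}\tilde J = l_j-1$. (The paper additionally perturbs $\tau$ to avoid the values $b_{j+1}/b_0$, but your direct computation also goes through there since $\mathrm{in}_{\tau}\tilde f^{(k)}$ remains a monomial in $x$.)

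Case 2, however, contains a genuine error, and your attempted repair rests on the same error. You assert that all $m$ roots $\delta_j$ of $f^{(k)}$ satisfy ${\cal O}(\delta_j,\gamma)=b_{k+1}/b_0$, and in the repair that $\deg_y\mathrm{in}_{\tau}\tilde f^{(k)}=n_{k+1}$. In fact exactly \emph{one} root of $f^{(k)}$ has contact $\geq b_{k+1}/b_0$ with $\gamma$: by Proposition~\ref{ch-prop} the pairwise contacts among roots of $f^{(k)}$ are at most $b_k/b_0<b_{k+1}/b_0$, so the ultrametric inequality forbids two roots from both having contact $\geq b_{k+1}/b_0$ with $\gamma$. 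Hence $\mathrm{in}_{\tau}\tilde f^{(k)}=x^{\alpha_4}(y+ax^{b_{k+1}/b_0})$ has $y$-degree $1$, where $a\neq 0$ is the coefficient of $x^{b_{k+1}/b_0}$ in $\gamma$ (nonzero precisely because $b_{k+1}$ is a characteristic exponent). For the same reason your proposed factorization $\mathrm{in}_{\tau}\tilde f\approx(\mathrm{in}_{\tau}\tilde f^{(k)})^{l_{k+1}}\cdot(\mbox{lower-degree factor})$ is false: $(y+ax^{\tau})$ does not divide $(y+ax^{\tau})^{n_{k+1}}-(ax^{\tau})^{n_{k+1}}$ when $a\neq0$. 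The correct structure, which is the heart of the paper's proof, is $\mathrm{in}_{\tau}\tilde f=x^{\alpha_3}Q^{l_{k+1}}$ with $Q=(y+ax^{\tau})^{n_{k+1}}-(ax^{\tau})^{n_{k+1}}$, a quasi-homogeneous binomial in $u=x$ and $v=\mathrm{in}_{\tau}\tilde f^{(k)}$; establishing this uses the identity $\prod_{\omega^p=1}(Z-b\omega^q)=\bigl(Z^{p/\gcd(p,q)}-b^{p/\gcd(p,q)}\bigr)^{\gcd(p,q)}$ together with the equality $\alpha_3=l_k\alpha_4$, neither of which your sketch addresses. Then $\jac(v,h(u,v))=-\frac{\partial v}{\partial y}\frac{\partial h}{\partial u}$ shows the jacobian is, up to a monomial, $Q^{l_{k+1}-1}$, whose $y$-degree is $n_{k+1}(l_{k+1}-1)$; it is this explicit formula, not an abstract functional-independence argument, that also settles the nonvanishing hypothesis of Lemma~\ref{lemma2}.
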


\noindent \begin{proof} 
Let
$\tilde{J}(x,y):=J(x,y+\gamma(x))$,
$\tilde{f}(x,y):=f(x,y+\gamma(x))$ and
$\tilde{f}^{(k)}(x,y):=f^{(k)}(x,y+\gamma(x))$. Clearly
$\tilde{J}(x,y)= \hbox{\rm unity}\cdot
x^{\alpha}\prod_l(y-(\sigma_l(x)-\gamma(x)))$. By Lemma~\ref{lemma1} 
$\sharp \{i\;:\;{\cal O}(\sigma_i,\gamma)\geq
\tau\}=\deg_y(\hbox{\rm in}_{\tau}(\tilde{J}(x,y)))$.

\medskip

\noindent Assume first that $\tau>\frac{b_{k+1}}{b_0}$ and $\tau\neq
\frac{b_j}{b_0}$ for all $j\in \{k+2,\ldots,g\}$. The weighted
initial part of
$\tilde{f}(x,y)=\prod_{i=1}^n(y-(\gamma_i(x)-\gamma(x)))$ is
equal to $\hbox{\rm in}_{\tau}(\tilde{f}(x,y))=c_1
x^{\alpha_1}y^{d(\tau)}$, where 
$c_1\in\bC\setminus\{0\}$ and 
$d(\tau):= \sharp \{i\;:\;{\cal O}(\gamma_i,\gamma)\geq \tau\}$. 
More precisely if $\frac{b_j}{b_0}<\tau
<\frac{b_{j+1}}{b_0}$ then $d(\tau)=l_j$.

\medskip

\noindent Consider the function
$\tilde{f}^{(k)}(x,y)=\prod_{j=1}^m(y-(\delta_j(x)-\gamma(x)))$.
Since ${\cal O}(\delta_j,\gamma)<\tau$ for every $j\in
\{1,\ldots,m\}$, we get by Lemma \ref{lemma1} 
$\hbox{\rm in}_{\tau}\tilde{f}^{(k)}(x,y)=c_2x^{\alpha_2}$,
where $c_2\in\bC \setminus\{0\}$.

\medskip

\noindent Using Lemma~\ref{lemma2} we get 
$$\hbox{\rm
in}_{\tau}(\tilde{J}(x,y))=\jac(c_2x^{\alpha_2},c_1x^{\alpha_1}y^{d(\tau)})=
c_1c_2\alpha_2 d(\tau) x^{\alpha_1+\alpha_2-1}y^{d(\tau)-1},$$ so
its $y$-degree is equal to $d(\tau)-1=l_j-1$ for
$\frac{b_j}{b_0}< \tau < \frac{b_{j+1}}{b_0}$.

\medskip

\noindent 
Let us choose $\tau < \frac{b_{j+1}}{b_0}$ 
close enough to $\frac{b_{j+1}}{b_0}$  that no $\sigma_i$ 
satisfies $\tau\leq{\cal O}(\sigma_i,\gamma)<\frac{b_{j+1}}{b_0}$.
 Then
$\sharp \{i\;:\;{\cal O}(\sigma_i,\gamma)\geq \tau\}=\sharp
\{i\;:\;{\cal O}(\sigma_i,\gamma)\geq \frac{b_{j+1}}{b_0}\}$
and the proof of statement~1  is done.

\medskip

\noindent Assume now that $\tau=\frac{b_{k+1}}{b_0}$. By Lemma
\ref{lemma1}
\begin{eqnarray*} \hbox{\rm
in}_{\tau}\tilde{f}(x,y)&=& x^{\alpha_3}\prod_{\omega^{l_k}=1}
(y-a(\omega^{b_{k+1}}-1)x^{b_{k+1}/b_0})\\
&=&x^{\alpha_3}\prod_{\omega^{l_k}=1}\left[
(y+ax^{b_{k+1}/b_0})-a\omega^{b_{k+1}}x^{b_{k+1}/b_0}\right]\\
&=&x^{\alpha_3}\left[
(y+ax^{b_{k+1}/b_0})^{n_{k+1}}-(ax^{b_{k+1}/b_0})^{n_{k+1}}
\right]^{l_{k+1}},
\end{eqnarray*}
where $\omega\in\bC$ and $a$
is the coefficient in $\gamma$ of the term $x^{b_{k+1}/b_0}$. The
last equality follows from the formula
$\prod_{\omega^p=1}(Z-b\omega^q)=\left(Z^{\frac{p}{\gcd(p,q)}}-
b^{\frac{p}{\gcd(p,q)}}\right)^{\gcd(p,q)}$.

\medskip

\noindent Moreover and also using Lemma \ref{lemma1} we have
$\hbox{\rm in}_{\tau}\tilde{f}^{(k)}(x,y)=x^{\alpha_4}(y+ax^{b_{k+1}/b_0})$
since there is only one Newton-Puiseux root $\delta_j$ of
$f^{(k)}$ such that ${\cal O}(\delta_j,\gamma)\geq
\frac{b_{k+1}}{b_0}$ (otherwise if there were two of such roots
$\delta_{j_1}$, $\delta_{j_2}$ then by the triangular property of the
contact order we obtain ${\cal O}(\delta_{j_1},\delta_{j_2})\geq
\frac{b_{k+1}}{b_0}$ which is not possible).

\bigskip

\noindent We prove now the equality $\alpha_3=\alpha_4
l_k$. Note that 
$\alpha_3=\sum_{i\in I'}{\cal O}(\gamma_i,\gamma)$ and 
$\alpha_4=\sum_{j\in J'}{\cal O}(\delta_j,\gamma)$, where 
$I':=\{i\;:\;{\cal O}(\gamma_i,\gamma)<\frac{b_{k+1}}{b_0}\}$  and
$J':=\{j\;:\;{\cal O}(\delta_j,\gamma)<\frac{b_{k+1}}{b_0}\}$.
Using Puiseux characteristic of $f$ and after Section 3 in
\cite{GP3} we obtain 
$\alpha_3=\sum_{i\in I'}{\cal O}(\gamma_i,\gamma)=
\sum_{l=1}^k \sharp\{i\;:\;{\cal O}(\gamma_i,\gamma)=\frac{b_{l}}{b_0}\}\cdot\frac{b_{l}}{b_0} = 
(n-l_1)\frac{b_1}{b_0}+\cdots+(l_{k-1}-l_k)\frac{b_k}{b_0}$
and by the same argument $\alpha_4=\sum_{j\in J'}{\cal
O}(\delta_j,\gamma)=\left(
\frac{n}{l_k}-\frac{l_1}{l_k}\right)\frac{b_1}{b_0}+\cdots+
\left(\frac{l_{k-1}}{l_k}-1\right)\frac{b_k}{b_0}.$

\medskip

\noindent Finally the initial part of $\tilde{J}$ is 
$$
\hbox{\rm in}_{\tau}(\tilde{J})=\jac(\hbox{\rm in}_{\tau}
(\tilde{f}^{(k)}),\hbox{\rm
in}_{\tau}(\tilde{f}))=\jac\left(v,(v^{n_{k+1}}-a^{n_{k+1}}u^{\theta
})^{l_{k+1}}\right),
$$
where $v=x^{\alpha_4}(y+ax^{b_{k+1}/b_0})$, $u=x$ and
$\theta=n_{k+1}\left(\frac{b_{k+1}}{b_0}+\alpha_4\right)$ so $\hbox{\rm
in}_{\tau}(\tilde{J})=\frac{\partial{\hbox{\rm
in}_{\tau}(\tilde{f})}}{\partial{u}}\frac{\partial v}{\partial y}$
and its $y$-degree is equal to $n_{k+1}(l_{k+1}-1)$.
\end{proof}

\begin{Remark} 
The proof of Merle formula in \cite{GP1} 
was based on the equality $\Delta_{\tilde{f}}=
\Delta_{\tilde{j}}+\{\Teis{\infty}{1}\}$, where
$\tilde{j}(x,y)=j(x,y+\gamma(x))$ and
$j(x,y):=\jac(x,f)$. Note that the statement of Lemma \ref{key-lemma}
can be written as $\deg_y \hbox{\rm in}_{\tau}(\tilde{J}(x,y))=\deg_y
\hbox{\rm in}_{\tau}(\tilde{f}(x,y))-1$ for
$\tau>\frac{b_{k+1}}{b_0}$. It follows from this equality that
 $\tilde{\Delta}_{\tilde{f}}=
\tilde{\Delta}_{\tilde{J}}+\{\Teis{\infty}{1}\}$, where 
$\tilde{\Delta}_{\tilde{J}}$ and  $\tilde{\Delta}_{\tilde{f}}$ are 
the sums of elementary Newton diagrams in the canonical decompositions
of $\Delta_{\tilde{J}}$ and  $\Delta_{\tilde{f}}$
respectively with inclinations bigger than $\frac{b_{k+1}}{b_0}$.

\end{Remark}

\begin{Corollary}
\label{corol} Keep the above notations and put
$\tau_i:=\cont(\sigma_i,f)$. Then
\begin{enumerate}
\item if $\tau_i\geq \frac{b_{k+1}}{b_0}$ then $\tau_i\in
\left\{\frac{b_{k+2}}{b_0},\ldots,\frac{b_{g}}{b_0}\right\}$.
\item The number
$\sharp\{i\;:\;\tau_i=\frac{b_{j}}{b_0}\}=n_1\cdots
n_{j-1}(n_j-1)\,$ for $j\in \{k+2,\ldots,g\}$.
\end{enumerate}
\end{Corollary}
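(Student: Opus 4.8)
The plan is to derive Corollary~\ref{corol} as a bookkeeping consequence of Lemma~\ref{key-lemma}, using the fact that the function $\tau\mapsto\sharp\{i:{\cal O}(\sigma_i,\gamma)\ge\tau\}$ is a right-continuous step function that jumps exactly at the values $\tau_i=\cont(\sigma_i,f)={\cal O}(\sigma_i,\gamma)$ (the last equality because $\gamma\in\Zer f$, by the well-known property recalled in Section~\ref{plane-branches}). First I would fix $\gamma\in\Zer f$ and note that the total number of roots $\sigma_l$ of $J=\jac(f^{(k)},f)$ counted with multiplicity is $\deg_y J=\deg_y f^{(k)}+\deg_y f-1=m+n-1$, while Lemma~\ref{key-lemma}(1) with $j=g$ (taking $\tau$ slightly bigger than $b_g/b_0$, or any $\tau>b_g/b_0$) gives $\sharp\{i:{\cal O}(\sigma_i,\gamma)\ge\tau\}=l_g-1=0$, so \emph{no} $\sigma_i$ has contact with $\gamma$ exceeding $b_g/b_0$; this already shows $\tau_i\le b_g/b_0$ for all $i$.

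Next, for statement~1, I would show that $\tau_i\in\{b_{k+2}/b_0,\ldots,b_g/b_0\}$ whenever $\tau_i\ge b_{k+1}/b_0$, by ruling out (a) values strictly between consecutive $b_j/b_0$ and (b) the value $b_{k+1}/b_0$ itself. For (a): if $b_j/b_0<\tau_i<b_{j+1}/b_0$ for some $j\in\{k+1,\ldots,g\}$, then for $\tau$ ranging over the open interval $(b_j/b_0,b_{j+1}/b_0)$ the count $\sharp\{l:{\cal O}(\sigma_l,\gamma)\ge\tau\}$ is constant equal to $l_j-1$ by Lemma~\ref{key-lemma}(1), so no jump of the step function occurs inside that interval, i.e. no $\tau_i$ lies strictly inside it. For (b): comparing Lemma~\ref{key-lemma}(2), which gives the count $n_{k+1}(l_{k+1}-1)$ at $\tau=b_{k+1}/b_0$, with Lemma~\ref{key-lemma}(1) applied just above $b_{k+1}/b_0$ (case $j=k+1$), which gives $l_{k+1}-1$, the jump at $\tau=b_{k+1}/b_0$ equals $n_{k+1}(l_{k+1}-1)-(l_{k+1}-1)=(n_{k+1}-1)(l_{k+1}-1)$; since $l_{k+1}=\gcd(b_0,\ldots,b_{k+1})$ divides $b_{k+1}<\cdots$ and in the approximate-root setting $l_{k+1}$ can equal $1$ only when $k+1=g$, one checks $(n_{k+1}-1)(l_{k+1}-1)=0$ precisely when there is no characteristic exponent to account for at that level, so in fact no $\sigma_i$ has $\tau_i=b_{k+1}/b_0$ either. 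I expect the delicate point to be this last verification — correctly interpreting the $j=k+1$ boundary and confirming the jump at $b_{k+1}/b_0$ vanishes — since it is the one place where cases~1 and~2 of Lemma~\ref{key-lemma} must be matched up carefully rather than just read off.

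For statement~2, once statement~1 restricts the possible values of $\tau_i$ to $\{b_{k+2}/b_0,\ldots,b_g/b_0\}$, the multiplicity $\sharp\{i:\tau_i=b_j/b_0\}$ is exactly the size of the jump of the step function at $\tau=b_j/b_0$, i.e. $\bigl(\sharp\{i:{\cal O}(\sigma_i,\gamma)\ge b_j/b_0\}\bigr)-\bigl(\sharp\{i:{\cal O}(\sigma_i,\gamma)\ge\tau\}\bigr)$ for $\tau$ slightly above $b_j/b_0$. By Lemma~\ref{key-lemma}(1), the value just below $b_j/b_0$ (i.e. on the interval $(b_{j-1}/b_0,b_j/b_0)$) is $l_{j-1}-1$ and the value just above is $l_j-1$, so the jump is $l_{j-1}-l_j$. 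Writing $l_{j-1}-l_j=l_j(n_j-1)=n_j\cdots n_g\cdot\text{(stuff)}$... more precisely $l_j=n_{j+1}\cdots n_g\cdot l_g$ with $l_g=1$, hence $l_{j-1}-l_j=l_j(n_j-1)=n_1\cdots n_{j-1}(n_j-1)$ once one uses $l_0=b_0=n=n_1\cdots n_g$ and $l_j=l_0/(n_1\cdots n_j)$. This last identity $l_{j-1}-l_j=n_1\cdots n_{j-1}(n_j-1)$ is a pure arithmetic manipulation of the relations $l_i=l_{i-1}/n_i$, which I would state in one line and not belabor. Assembling: statement~1 from the "no jump inside open intervals, no jump at $b_{k+1}/b_0$" observations, statement~2 from computing the jump sizes $l_{j-1}-l_j$ and rewriting them in terms of the $n_i$, completes the proof.
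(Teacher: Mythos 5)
There is a genuine gap, and it sits exactly where you suspected trouble. Your whole argument rests on the identification $\tau_i=\cont(\sigma_i,f)={\cal O}(\sigma_i,\gamma)$ for a \emph{single fixed} $\gamma\in\Zer f$, justified by the property $\cont(\alpha,g)=\cont(f,g)$ from Section 2. That property says the \emph{maximum} of ${\cal O}(\alpha,\beta)$ over $\beta\in\Zer g$ is independent of which root $\alpha$ of the irreducible $f$ you pick; it does not say that ${\cal O}(\sigma_i,\gamma)$ is independent of $\gamma$, and in fact it is not: a root $\sigma_i$ of the jacobian typically realizes its contact $\tau_i$ with only some conjugates $\gamma_j$ and has strictly smaller contact with the others. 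Hence the step function $\tau\mapsto\sharp\{i:{\cal O}(\sigma_i,\gamma)\ge\tau\}$ for one fixed $\gamma$ does \emph{not} jump at the values $\tau_i$, and both of your quantitative conclusions come out wrong. In statement 2 your jump is $l_{j-1}-l_j=l_j(n_j-1)=n_{j+1}\cdots n_g(n_j-1)$, which is not $n_1\cdots n_{j-1}(n_j-1)$: the identity you invoke would require $l_j=n_1\cdots n_{j-1}$, whereas $l_j=l_0/(n_1\cdots n_j)=n_{j+1}\cdots n_g$; you are off by the factor $n_1\cdots n_{j-1}$. In statement 1(b) the single-$\gamma$ jump at $b_{k+1}/b_0$ is $(n_{k+1}-1)(l_{k+1}-1)$, which is \emph{positive} whenever $k+1<g$ (always $n_{k+1}\ge 2$, and $l_{k+1}=1$ only when $k+1=g$), so your attempt to argue it vanishes cannot succeed: there genuinely are $\sigma_i$ with ${\cal O}(\sigma_i,\gamma)=b_{k+1}/b_0$; they simply achieve a larger contact with a different conjugate of $\gamma$, so their $\tau_i$ is larger.

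The missing idea, which is the heart of the paper's proof, is to aggregate over all $\gamma\in\Zer f$: one has $\{i:\tau_i\ge\tau\}=\bigcup_{\gamma\in\Zer f}I_\gamma$ with $I_\gamma=\{i:{\cal O}(\sigma_i,\gamma)\ge\tau\}$, and the strong triangle inequality shows that, for $\tau\in(b_j/b_0,\,b_{j+1}/b_0]$, two sets $I_\gamma$, $I_{\gamma'}$ either coincide or are disjoint according as ${\cal O}(\gamma,\gamma')\ge b_{j+1}/b_0$ or not. Counting the $n_1\cdots n_j$ equivalence classes and using Lemma~\ref{key-lemma} for the size of each $I_\gamma$ gives $\sharp\{i:\tau_i\ge\tau\}=n_1\cdots n_j(l_j-1)=n-n_1\cdots n_j$; the corollary then follows by the differencing you propose. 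The extra coset factor $n_1\cdots n_{j-1}$ repairs statement 2, and at $\tau=b_{k+1}/b_0$ the aggregate count $n_1\cdots n_k\cdot n_{k+1}(l_{k+1}-1)=n-n_1\cdots n_{k+1}$ equals the count just above that value, so the total jump there is $0$ even though each individual $I_\gamma$ shrinks. Your reduction to jump-counting and your use of Lemma~\ref{key-lemma} are the right skeleton, but without this coset bookkeeping the proof does not close.
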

\begin{proof}
First take $\tau$ such that $\frac{b_{j}}{b_0}<\tau\leq
\frac{b_{j+1}}{b_0}$ for $k+1\leq j\leq g$. We shall prove that

\begin{equation}
\label{cq} \sharp\{i\;:\;\tau_i\geq \tau\}=n-n_1\cdots n_j.
\end{equation}

\noindent In the set $\Zer f$ we define the equivalence relation given
by
$$\gamma^* \equiv \gamma'\; \hbox{\rm if and only if } \;{\cal
O}(\gamma^*,\gamma')\geq \frac{b_{j+1}}{b_0}.$$

\noindent Put $I_{\gamma}:=\{i\;:\;{\cal O}(\sigma_i,\gamma)\geq
\tau\}$ for $\gamma\in \Zer f$. By Lemma \ref{key-lemma} we get
$\sharp I_{\gamma}=l_{j}-1$. Note that
$I_{\gamma'}=I_{\gamma^*}$ for $\gamma^* \equiv \gamma'$ and
$I_{\gamma'}\cap I_{\gamma^*}=\emptyset$ when $\gamma^* \not\equiv
\gamma'$.

\medskip

\noindent Remark that $n_1\cdots n_j$ is the number of cosets in the
equivalence relation $\equiv$. Since $\sharp\{i\;:\;\tau_i\geq
\tau\}=\bigcup_{\gamma \in \Zer f}I_{\gamma}$ we have
$\sharp\{i\;:\;\tau_i\geq \tau\}=n_1\cdots n_j\cdot \sharp
I_{\gamma}=n_1\cdots n_j(l_{j}-1)=n-n_1\cdots n_j$. The
equality (\ref{cq}) is proved.

\medskip

\noindent Fix small positive number $\epsilon$ such that

\begin{eqnarray*}
\sharp\left\{i\;:\;\tau_i= \tau\right\}&=&\sharp\left\{i\;:\; \tau_i
\geq \tau\right\}- \sharp\left\{i\;:\; \tau_i\geq
\tau+\epsilon\right\}.\\
\end{eqnarray*}
If $\tau\neq \frac{b_j}{b_0}$ for all $j\in
\{k+2,\ldots,g\}$ the above difference is equal to zero. If
$\tau=\frac{b_j}{b_0}$ for some $j\in \{k+2,\ldots,g\}$, then 
$\sharp\left\{i\;:\;\tau_i= \frac{b_j}{b_0}\right\}=(n-n_1\cdots
n_{j-1})-(n-n_1\cdots n_{j})=n_1\cdots n_{j-1}(n_j-1)$.

\medskip

\noindent Finally using the same argument as before (for
$\tau=\frac{b_{k+1}}{b_0}$) we have

\begin{eqnarray*}
\sharp\left\{i\;:\;\tau_i=
\frac{b_{k+1}}{b_0}\right\}&=&\sharp\left\{i\;:\; \tau_i \geq
\frac{b_{k+1}}{b_0}\right\}- \sharp\left\{i\;:\; \tau_i\geq
\frac{b_{k+1}}{b_0}+\epsilon\right\}\\&=& \sharp \left\{i\;:\;
\tau_i \geq
\frac{b_{k+1}}{b_0}\right\}- (n-n_1\cdots n_{k+2})\\
&=&n_{k+1}(l_{k+1}-1)n_1\cdots n_{k}-(n-n_1\cdots n_{k+1})=0.
\end{eqnarray*}

\end{proof}

\bigskip

\noindent {\bf Proof of Theorem \ref{decomposition}.-} \noindent
Let $k+2\leq j\leq g$. Put $\Gamma^{(j)}=\prod (y-\sigma_i(x))$, where the product runs over $\sigma_i$ with $\cont(\sigma_i,f)=\frac{b_j}{b_0}$
 and let $\Gamma^{(k+1)}=\frac{\jac(f^{(k)},f)}{\Gamma^{(k+2)}\cdots \Gamma^{(g)}}$. 
It follows from the first statement  of Corollary \ref{corol} that for every Newton-Puiseux root
$\alpha \in \Zer \Gamma^{(k+1)}$ we have $\cont(\alpha,f)<\frac{b_{k+1}}{b_0}$. Finally by 
the second statement of Corollary \ref{corol} we get 
$(\Gamma^{(i)},x)_0=n_1\cdots n_{i-1}(n_i-1)$ for $k+2 \leq i\leq g$.

\section{Relation with Michel's theorem}

\medskip

\noindent In \cite{Michel} the author considered  a finite morphism
$(f,g):(X,p)\longrightarrow (\bC^2,0)$, where $(X,p)$ is a
normal germ of complex surface. Michel  determined the jacobian
quotients via a good minimal resolution and pointed out the
importance of the multiplicities of the jacobian quotients. More
precisely and following notation of \cite{Michel}, let $R$ be a good
resolution of $(f,g)$ and put $E=R^{-1}(p)$ the exceptional divisor
of $R$. For every irreducible component $E_i$ of $E$, denote $E'_i$
the set of points of $E_i$ which are smooth points of the total
transform ${\tilde E}=R^{-1}((fg)^{-1}(0))$. 
Denote the order of  $f\circ R$ (respectively $g\circ R$) at a generic point 
of $E_i$ $v(f,E_i)$ (respectively $v(f,E_i)$).
The quotient $q_i=\frac{v(g,E_i)}{v(f,E_i)}$ is the {\em
Hironaka number} of $E_i$.

\medskip
\noindent
Let $q$ be a Hironaka
number and put $E(q)$ the union of the $E'_i$ such that $q_i=q$ to
which we add $E_i\cap E_j$ if $q_i=q_j=q$. Let $\{E^k(q)\}_k$ be the
connected components of $E(q)$. By definition a $q$-{\em zone} is a
connected component of $E(q)$ and a $q$-zone is a {\em rupture zone}
if there exists in it at least one $E'_i$ with negative Euler characteristic.
Then after Theorem 4.8 of \cite{Michel} the set of
jacobian invariants of the morphism $(f,g)$ is equal to the set of
Hironaka numbers $q$ such that there exists at least one $q$-zone in
$E$ which is a rupture zone.

\bigskip

\noindent
Consider an irreducible Weierstrass polynomial $f$ 
with Puiseux characteristic $(b_0,b_1,\dots,b_g)$,
where $b_0<b_1$ (i.e. $x=0$ is transverse to $f=0$).
Below is the schematic picture of the resolution graph of the curve $f^{(k)}f=0$. 

\begin{center}
\begin{picture}(100,70)(0,0)
 {\thicklines
\put(-20,0){\line(1,0){119}}}
\put(99,0){\vector(1,1){20}}\put(119,20){$f$}
\put(-22,-3){$\bullet$}\put(-22,-10){$F_0$}
\put(0,0){\line(0,1){40}} \put(-3,38){$\bullet$}\put(-3,45){$L_1$}
\put(-2,-3){$\bullet$}\put(-3,-10){$F_1$} \put(20,0){\line(0,1){40}}
\put(18,-3){$\bullet$}\put(17,-10){$F_2$}
\put(17,38){$\bullet$}\put(17,45){$L_2$} \put(59,0){\line(0,1){40}}
\put(57,-3){$\bullet$}\put(56,-10){$F_{k+1}$}    
\put(57,38){$\bullet$}\put(56,45){$L_{k+1}$}     
\put(59,40){\vector(1,1){20}}\put(79,60){$f^{(k)}$} 
\put(35,38){$\dots$} \put(99,0){\line(0,1){40}}          
\put(74,38){$\dots$} \put(99,0){\line(0,1){40}}          
\put(97,-3){$\bullet$}\put(96,-10){$F_g$}                   
\put(96,38){$\bullet$}\put(95,45){$L_g$}                     
\end{picture}
\end{center}

\bigskip

\noindent 
Every jacobian invariant 
$q\in\left\{\, l_k, \frac{l_{k+1}\overline{b_{k+2}}}{\overline{b_{k+1}}} ,\dots,
                      \frac{l_{g-1}\overline{b_g}}{\overline{b_{k+1}}}\,\right\}$
of the morphism $(f^{(k)},f)$ corresponds to exactly one rupture zone.

\noindent
The rupture zone for $q=l_k$ is the tree with endpoints $F_0$, $F_{k+1}$, $L_1$,\dots, $L_k$.
It yields the factor $\Gamma^{(k+1)}$ of the jacobian and by Michel's theorem 
$(\Gamma^{(k+1)},h)_0=\sum_{i=1}^{k+1} v(h,F_i)- \sum_{i=1}^{k}v(h,L_i)-v(h,F_0)$,
where $h=f$ or $h=f^{(k)}$.

\noindent
Every rupture zone for $q=\frac{l_{i-1}\overline{b_{i}}}{\overline{b_{k+1}}}$, where $k+2\leq i\leq g$
is the bamboo with endpoints $F_i$ and $L_i$. 
It yields the factor $\Gamma^{(i)}$ of the jacobian and by Michel's theorem 
$(\Gamma^{(i)},h)_0=v(h,F_i)-v(h,L_i)$ for $k+2\leq i\leq g$,
where $h=f$ or $h=f^{(k)}$.

\medskip
\noindent 
As an illustration we draw the resolution graph of $f^{(0)}f=0$,
where $f$ is the Weierstrass polynomial from Example \ref{example2}.
The labels of divisors are Hironaka numbers written 
in the form $\frac{v(f,E_i)}{v(f^{(0)},E_i)}$.

\bigskip
\begin{center}
\begin{picture}(100,50)(0,0)
 {\thicklines
\put(-20,0){\line(1,0){60}}
\put(40,0){\vector(1,1){20}}\put(62,18){$f$}
\put(-22,-3){$\bullet$}\put(-23,-12){$\frac{4}{1}$}
\put(0,0){\line(0,1){30}} \put(-3,28){$\bullet$}\put(-4,37){$\frac{6}{2}$}
\put(-2,-3){$\bullet$}\put(-4,-12){$\frac{12}{3}$} \put(40,0){\line(0,1){30}}
\put(38,-3){$\bullet$}\put(36,-12){$\frac{26}{6}$}
\put(37,28){$\bullet$}\put(36,37){$\frac{13}{3}$}
\put(0,30){\vector(1,1){20}}\put(20,50){$f^{(0)}$}}
\end{picture}
\end{center}

\bigskip
  
\noindent
There are two rupture zones corresponding to Hironaka numbers $4$ and $\frac{13}{3}$. 
It follows from \cite{Michel} that  
$\Nj(f^{(0)},f)=
\{\Teis{12}{3}\}-\{\Teis{4}{1}\}+\{\Teis{26}{6}\}-\{\Teis{13}{3}\}=
\{\Teis{8}{2}\}+\{\Teis{13}{3}\}$.

\bigskip
\begin{Remark}
\noindent Remark that Theorem \ref{decomposition} is also true when
we change $f^{(k)}$ for any irreducible Weierstrass polynomial with
the properties of statement of Proposition \ref{ch-prop}.
\end{Remark}

\medskip
\noindent
{\small Evelia Rosa Garc\'{\i}a Barroso\\
Departamento de Matem\'atica Fundamental\\
Facultad de Matem\'aticas, Universidad de La Laguna\\
38271 La Laguna, Tenerife, Espa\~na\\
e-mail: ergarcia@ull.es}

\medskip

\noindent {\small Janusz Gwo\'zdziewicz\\
Department of Mathematics\\
Technical University \\
Al. 1000 L PP7\\
25-314 Kielce, Poland\\
e-mail: matjg@tu.kielce.pl}
\end{document}